\newtheorem{thm}{Theorem}[section]
\newtheorem{cor}[thm]{Corollary}
\newtheorem{prop}[thm]{Proposition}
\newtheorem{lem}[thm]{Lemma}
\theoremstyle{definition}
\newtheorem{exa}[thm]{Example}
\theoremstyle{remark}
\renewcommand{\bar}{\overline}
\newcommand{\excise}[1]{}
\newcommand{\qandq}{\quad\text{and}\quad}
\newcommand{\0}{\emptyset}
\newcommand{\compn}{\vDash}
\newcommand{\Cc}{\mathbb{C}}
\newcommand{\Ff}{\mathbb{F}}
\newcommand{\Xx}{\mathbb{X}}
\newcommand{\Zz}{\mathbb{Z}}
\newcommand{\GG}{\mathcal{G}}
\newcommand{\OO}{\mathcal{O}}
\newcommand{\tp}{\otimes}
\newcommand{\sm}{\setminus}
\newcommand{\HH}{\mathcal{H}}
\newcommand{\D}{\Delta}
\DeclareMathOperator{\ps}{ps}
\DeclareMathOperator{\rk}{rk}
\DeclareMathOperator{\nul}{null}
\newcommand{\prin}{\Pi}
\newcommand{\Flat}{\mathcal{F}}  %% set of flats of a graph
\newcommand{\AO}{\mathcal{A}}  %% set of acyclic orientations
\newcommand{\QSym}{\mathcal{Q}Sym}
\author{Brandon Humpert}
\address{Department of Mathematics, University of Kansas, 405 Snow Hall,
1460 Jayhawk Blvd, Lawrence, Kansas 66045-7594}
\author{Jeremy L.\ Martin}
\address{Department of Mathematics, University of Kansas, 405 Snow Hall,
1460 Jayhawk Blvd, Lawrence, Kansas 66045-7594}
\title[The Incidence Hopf Algebra of Graphs]{The Incidence Hopf Algebra of Graphs}
\keywords{combinatorial Hopf algebra, graph, chromatic polynomial, Tutte polynomial, acyclic orientation}
\subjclass[2010]{%
16T30, %  Hopf algebras ... connections with combinatorics
05C31, % Graph polynomials
05E15} % Combinatorial aspects of groups and algebras
\thanks{An extended abstract of this article appeared in the
proceedings of the 23rd International Conference on Formal Power Series
and Algebraic Combinatorics \cite{ExtAbs}.}
\date{September 12, 2011}
\begin{document}
\begin{abstract}
The \emph{graph algebra} is a commutative, cocommutative, graded,
connected incidence Hopf algebra, whose basis elements correspond to
finite graphs and whose Hopf product and coproduct admit simple
combinatorial descriptions.  We give a new formula for the antipode in
the graph algebra in terms of acyclic orientations; our formula
contains many fewer terms than Takeuchi's and Schmitt's more general
formulas for the antipode in an incidence Hopf algebra.  Applications
include several formulas (some old and some new) for evaluations of
the Tutte polynomial.
\end{abstract}

\maketitle

%%%%%%%%%%%%%%%%%%%%%%%%%%%%%%%%%%%%%%%%%%%%%%%%%%%%%%%%%%%%%%%%%%%%%%%%%%%%%%%%
\section{Introduction}

The \emph{graph algebra $\GG$} is a commutative, cocommutative, graded, connected
Hopf algebra, whose basis elements correspond to finite graphs,
and whose Hopf product and coproduct admit simple combinatorial
descriptions.  The graph algebra was first considered by Schmitt in
the context of incidence Hopf algebras \cite[\S12]{Schmitt} and
furnishes an important example in the work of Aguiar, Bergeron and
Sottile \cite[Example~4.5]{ABS}.

In this paper, we derive a nonrecursive formula
(Theorem~\ref{antipode-theorem}) for the Hopf antipode in $\GG$.  Our
formula is specific to the graph algebra in that it involves acyclic
orientations.  Therefore, it is not merely a specialization of
the antipode formulas of Takeuchi \cite{Takeuchi} or Schmitt \cite{Schmitt}
(in the more general settings of, respectively,
connected bialgebras and incidence Hopf algebras).
Aguiar and Ardila \cite{AA-PC} have independently discovered a more general
antipode formula than ours, in the context of Hopf monoids; their work will
appear in a forthcoming paper.

Our formula turns out to be well suited for studying polynomial graph
invariants, including the Tutte polynomial $T_G(x,y)$ (see
\cite{BryOx}) and various specializations of it.  Specifically, to
every graph $G$ and character $\zeta$ on the graph algebra, we
associate the function $P_{\zeta,G}(k)$ whose value at an integer $k$
is $\zeta^k(G)$, where the superscript denotes convolution power.  For
example, if $\zeta$ is the characteristic function of edgeless graphs,
then $P_{\zeta,G}(k)$ is the chromatic polynomial of $G$.  In fact, it
turns out that $P_{\zeta,G}(k)$ is a polynomial function of $k$ for
\emph{all} characters~$\zeta$, so we may regard $P_\zeta$ as a map
$\GG\to\Cc[k]$ sending $G$ to $P_{\zeta,G}(k)$.  This map is in fact a
morphism of Hopf algebras; it is not graded (so it is not quite a
morphism of \emph{combinatorial} Hopf algebras in the sense of Aguiar,
Bergeron and Sottile~\cite{ABS}) but does preserve the canonical
filtration by degree.  Together with the antipode formula, this
observation leads to combinatorial interpretations of the convolution
inverses of several natural characters, as we discuss in
Section~\ref{char-section}.

The Tutte polynomial $T_G(x,y)$ can itself be viewed as a character on
the graph algebra.  We prove that its $k$-th convolution power itself
is a Tutte evaluation at rational functions in $x,y,k$
(Theorem~\ref{tutte-char}).  This result implies several well-known
formulas such as Stanley's formula for acyclic orientations in terms
of the chromatic polynomial~\cite{AO}, as well as some interpretations
of less familiar specializations of the Tutte polynomial, and an
unusual-looking reciprocity relation between complete graphs of
different sizes (Proposition~\ref{wacko-reciprocity} and Corollary~\ref{funky}).

The authors thank Marcelo Aguiar, Federico Ardila, Diego Cifuentes,
Aaron Lauve, and Vic Reiner for numerous helpful conversations, and
two anonymous referees for valuable assistance in improving the
exposition.

%%%%%%%%%%%%%%%%%%%%%%%%%%%%%%%%%%%%%%%%%%%%%%%%%%%%%%%%%%%%%%%%%%%%%%%%%%%%%%%%
\section{Hopf algebras}
\subsection{Basic definitions}
We briefly review the basic facts about Hopf algebras, omitting most of the
proofs.  Good sources for the full details include Sweedler
\cite{Sweedler} and (for combinatorial Hopf algebras) Aguiar, Bergeron
and Sottile \cite{ABS}.  For the more general setting of Hopf monoids,
see Aguiar and Mahajan \cite{AgMa}.  We do not know of specific
references for Lemma~\ref{prin-morphism} and
Proposition~\ref{to-binomial-alg}, but they are well known as part of
the general folklore of (combinatorial) Hopf algebras.

Fix a field $\Ff$ of characteristic 0 (typically $\Ff=\Cc$). A \emph{bialgebra} $\HH$ is a
vector space over $\Ff$ equipped with linear maps
$$
m: \HH \tp \HH \to \HH, \qquad u: \Ff \to \HH, \qquad \D: \HH \to
\HH \tp \HH,\qquad \epsilon: \HH \to \Ff,
$$
respectively the \emph{multiplication}, \emph{unit}, \emph{comultiplication}, and
\emph{counit}, such that the following properties are
satisfied:
\begin{enumerate}
\item  $m \circ (m \tp I) = m \circ (I \tp m)$ (associativity);
\item $m \circ (u \tp I) = m \circ (I \tp u) = I$ (where $I$ is the identity map on $\HH$);
\item $(\D \tp I) \circ \D = (I \tp \D) \circ \D$
(coassociativity);
\item $(\epsilon \tp I) \circ \D = (I \tp \epsilon) \circ \D = I$; and
\item $\D$ and $\epsilon$ are multiplicative (equivalently, $m$ and $u$ are comultiplicative).
\end{enumerate}
If there exists a bialgebra automorphism $S: \HH \to \HH$
such that $m \circ (S \tp I) \circ \D = m \circ (I \tp S) \circ \D = u
\circ \epsilon$, then $\HH$ is a \emph{Hopf algebra} and
$S$ is its \emph{antipode}.  It can be shown that $S$ is the
unique automorphism of $\HH$ with this property.

It is often convenient to write expressions such as coproducts in \emph{Sweedler notation}, where the index of summation is suppressed: for instance, $\Delta(h)=\sum h_1\otimes h_2$
rather than $\Delta(h)=\sum_i h^{(i)}_1\otimes h^{(i)}_2$.

A Hopf algebra $\HH$ is \emph{graded} if $\HH = \bigoplus_{n\geq 0}
\HH_n$ as vector spaces, and multiplication and comultiplication
respect this decomposition, i.e.,
$$
m(\HH_i\otimes\HH_j)\subseteq\HH_{i+j}
\quad\text{and}\quad
\D(\HH_n)\subseteq\sum_{i+j=n}\HH_i\otimes\HH_j.
$$
If $h\in\HH_i$, we say that $h$ is \emph{homogeneous of degree~$i$}.
The algebra $\HH$ is \emph{connected} if $\dim(\HH_0) = 1$.
Most Hopf algebras arising naturally in
combinatorics are
graded and connected, and every algebra we consider henceforth will
be assumed to have these properties.

Let $\HH$ be a graded and connected
bialgebra.  There is a unique Hopf antipode on $\HH$,
defined inductively by the formulas
\begin{subequations}
\begin{eqnarray}
S(h)=h && \text{ for } h\in\HH_0, \label{antipode-zero}\\
(m \circ (I \tp S) \circ \D)(h) = 0 && \text{ for }
h\in\HH_i,\ i>0. \label{antipode-pos}
\end{eqnarray}
\end{subequations}
Formula \eqref{antipode-pos} can be rewritten more explicitly using Sweedler notation.
If $\Delta(h)=\sum h_1\otimes h_2$, then $\sum h_1S(h_2) = 0$, so solving for $S(h)$
gives
\begin{equation} \label{antipode-formula}
S(h)=-\sum h_1S(h_2),
\end{equation}
the sum over all summands in which the degree of $h_2$ is strictly less
than that of $h$.

A \emph{character} of a Hopf algebra $\HH$ is a multiplicative linear
map $\phi: \HH \to \Ff$.  The \emph{convolution product} of
two characters is $\phi\ast\psi = (\phi \tp \psi) \circ \D$.
%That is, if $\Delta h=\sum_i h^{(i)}_1\tp h^{(i)}_2$, then
%$(\phi*\psi)(h)=\sum_i \phi(h^{(i)}_1)\psi(h^{(i)}_2).$
That is, if $\Delta h=\sum h_1\tp h_2$, then
$$(\phi*\psi)(h)=\sum \phi(h_1)\psi(h_2)$$
with both sums in Sweedler notation.
We write $\phi^k$ for the $k$-th convolution power
of $\phi$; if $k<0$ then $\phi^k=(\phi^{-1})^{-k}$.
Convolution makes the set of characters
$\Xx(\HH)$ into a group, with identity $\epsilon$ and inverse given by
\begin{equation}
\label{character-antipode}
\phi^{-1} = \phi \circ S.
\end{equation}
There is a natural involutive
automorphism $\phi\mapsto\bar\phi$ of $\Xx(\HH)$, given by
$\bar\phi(h) = (-1)^n \phi(h)$ for $h \in \HH_n$.
If $\HH$ is a graded connected Hopf algebra and $\zeta\in\Xx(H)$, then the pair $(\HH, \zeta)$ is called a \emph{combinatorial Hopf algebra}, or CHA for short.
A \emph{morphism} of CHAs $\Phi:(\HH,\zeta)\to(\HH',\zeta')$ is a linear transformation $\HH\to\HH'$ that is a morphism of
Hopf algebras (i.e., a linear transformation that preserves the operations of a bialgebra) such that $\zeta\circ\Phi=\zeta'$.

%------------------------------------------------------------
\subsection{The binomial Hopf algebra}

The \emph{binomial Hopf algebra} is the ring of polynomials $\Ff[k]$
in one variable $k$, with the usual multiplicative structure;
comultiplication defined by $\D(f(k)) = f(k \tp 1 + 1 \tp k)$ and
$\Delta(1)=1\tp1$; counit $\epsilon(f(k)) = \epsilon_0(f(k))=f(0)$;
and character $\epsilon_1(f(k)) = f(1)$.  A theme of this article,
that polynomial invariants of elements of a Hopf algebra $\HH$ can be
viewed as the values of a morphism $\HH\to\Ff[k]$.
The main result in this vein, Proposition~\ref{to-binomial-alg},
can be proved with elementary methods, but we instead
give a longer proof that illustrates the connection
to the work of Aguiar, Bergeron, and Sottile~\cite{ABS}.
In order to do so, 
we begin by reviewing some facts about compositions and quasisymmetric
functions; for more details, see, e.g., \cite[\S7.19]{EC2}.

Let $n$ be a nonnegative integer. A \emph{composition
of~$n$} is an ordered sequence $\alpha=(\alpha_1,\dots,\alpha_\ell)$
of positive integers such that $\alpha_1+\dots+\alpha_\ell=n$; in this
case we write $\alpha\compn n$.  The number $\ell=\ell(\alpha)$ is the
\emph{length} of $\alpha$.  The corresponding \emph{monomial
quasisymmetric function} is the formal power series
  \begin{equation} \label{defM}
  M_\alpha=\sum_{0<i_1<\cdots<i_\ell}x_{i_1}^{\alpha_1}\cdots x_{i_\ell}^{\alpha_\ell}
  \end{equation}
in countably infinitely many commuting variables $\{x_1,x_2,\dots\}$.
The $\Ff$-vector space spanned by the $M_\alpha$ is denoted $\QSym$.
This is in fact a Hopf algebra, with the natural addition,
multiplication, and unit; counit
$$\epsilon(M_\alpha)=\begin{cases} 1 & \text{ if } \ell(\alpha)=0,\\
0 & \text{ if } \ell(\alpha)>0;\end{cases}$$
and comultiplication
$$\Delta(M_{(\alpha_1,\dots,\alpha_\ell)}) = \sum_{i=0}^\ell M_{(\alpha_1,\dots,\alpha_i)}M_{(\alpha_{i+1},\dots,\alpha_\ell)}.$$
For $F(x_1,x_2,\dots)\in\QSym$, let
$\zeta_Q(F)$ be the number obtained by
substituting $x_1=1$ and $x_2=x_3=\cdots=0$. The map~$\zeta_Q$ is a
character on~$\QSym$.  Aguiar, Bergeron, and Sottile
\cite[Thm.~4.1]{ABS} proved that $(\QSym,\zeta_Q)$ is a terminal
object in the category of CHAs, i.e., that every CHA $(\HH,\zeta)$ has a
unique morphism
  $$\Psi:(\HH,\zeta)\to(\QSym,\zeta_Q),$$
given explicitly on $h\in\HH_n$ by
  $$\Psi(h)=\sum_{\alpha\compn n} \zeta_\alpha(h)M_\alpha\,;$$
here $\zeta_\alpha:\HH\to\Ff$ is the composite function
  $$\HH\xrightarrow{\Delta^{\ell-1}}\HH^{\otimes\ell}\xrightarrow{\pi_\alpha}\HH_{\alpha_1}\otimes\cdots\otimes\HH_{\alpha_\ell}\xrightarrow{\zeta^{\otimes\ell}}\Ff$$
where $\ell=\ell(\alpha)$ is the number of parts of $\alpha$,
and $\pi_\alpha$ is the tensor product of the canonical projections of $\HH$
onto the graded pieces $\HH_{\alpha_i}$.

For $F(x_1,x_2,\dots)\in\QSym$, let $\ps_k^1(F)$ be the number
obtained by substituting $x_1=\cdots=x_k=1$ and $x_{k+1}=\cdots=0$.
In particular, $\ps_1^1=\zeta_Q$.  The map $\ps_k^1$ is a
specialization of a map called the \emph{principal
specialization}~\cite[pp.~302--303]{EC2}. By \eqref{defM}, we have
$$\ps_k^1(M_\alpha)=\frac{k(k-1)\cdots(k-\ell(\alpha)+1)}{\ell(\alpha)!}=\binom{k}{\ell(\alpha)}.$$
Accordingly, we can regard $\ps_k^1$ as a map
  $$\prin:\QSym\to\Ff[k]$$
sending $M_\alpha$ to $\ps_k^1(M_\alpha)$.  (The reason for the apparently
redundant notation is that when we write $\ps_k^1$, we are regarding~$k$
as an integer, while when we write $\prin$, we are regarding~$k$ as the
indeterminate in the polynomial ring $\Ff[k]$.)

\begin{lem} \label{prin-morphism}
The map $\prin:\QSym\to\Ff[k]$ is a morphism of Hopf algebras.
Moreover, $\zeta_Q=\epsilon_1\circ\prin$.
\end{lem}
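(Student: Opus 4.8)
The plan is to verify directly that $\prin$ respects each of the four relevant bialgebra operations (multiplication, unit, comultiplication, counit), working entirely in the monomial basis $\{M_\alpha\}$, and then to compute $\epsilon_1\circ\prin$ on $M_\alpha$. Since $\prin(M_\alpha)=\binom{k}{\ell(\alpha)}$ depends only on the length $\ell=\ell(\alpha)$, it is convenient to first record the observation that $\prin$ factors through the ``length'' data of a composition; this will make every verification a statement about binomial coefficients.

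First I would handle multiplication. Recall that in $\QSym$ the product $M_\alpha M_\beta$ is a sum of monomial quasisymmetric functions $M_\gamma$ over the \emph{overlapping shuffles} (quasi-shuffles) of $\alpha$ and $\beta$; the lengths $\ell(\gamma)$ that occur range from $\max(\ell(\alpha),\ell(\beta))$ up to $\ell(\alpha)+\ell(\beta)$, and the number of times a composition of length $\ell(\alpha)+\ell(\beta)-j$ arises (summed over all such $\gamma$) is exactly $\binom{\ell(\alpha)}{j}\binom{\ell(\beta)}{j}j!$, the number of ways to choose $j$ parts of $\alpha$, $j$ parts of $\beta$, and a bijection between them describing which parts are merged. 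Applying $\prin$ and using the Vandermonde-type identity
\begin{equation*}
\binom{k}{\ell(\alpha)}\binom{k}{\ell(\beta)}
=\sum_{j\ge 0}\binom{\ell(\alpha)}{j}\binom{\ell(\beta)}{j}j!\binom{k}{\ell(\alpha)+\ell(\beta)-j}
\end{equation*}
shows $\prin(M_\alpha M_\beta)=\prin(M_\alpha)\prin(M_\beta)$. The unit is immediate: $M_{()}\mapsto\binom{k}{0}=1$. For the counit, $\epsilon_{\Ff[k]}(\prin(M_\alpha))=\big[\binom{k}{\ell(\alpha)}\big]_{k=0}=[\ell(\alpha)=0]=\epsilon_{\QSym}(M_\alpha)$. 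For comultiplication, apply $\prin\tp\prin$ to the stated coproduct formula: the right-hand side becomes $\sum_{i=0}^{\ell}\binom{k}{i}\tp\binom{k}{\ell-i}$, which under the identification $\Ff[k]\tp\Ff[k]\cong\Ff[k',k'']$ is $\sum_{i=0}^\ell\binom{k'}{i}\binom{k''}{\ell-i}=\binom{k'+k''}{\ell}$ by Vandermonde, and this is exactly $\D_{\Ff[k]}\big(\binom{k}{\ell}\big)=\D_{\Ff[k]}(\prin(M_\alpha))$ since $\binom{k}{\ell}$ is a polynomial in $k$ and $\D$ substitutes $k\mapsto k'+k''$.

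Finally, $\epsilon_1(\prin(M_\alpha))=\big[\binom{k}{\ell(\alpha)}\big]_{k=1}$, which equals $1$ if $\ell(\alpha)\in\{0,1\}$ and $0$ otherwise; this matches $\zeta_Q(M_\alpha)$, since setting $x_1=1$ and all other $x_i=0$ in $M_\alpha$ picks out the constant term ($\ell=0$) and the single linear monomial $x_1^{\alpha_1}$ when $\ell=1$, and gives $0$ when $\ell\ge 2$. Hence $\zeta_Q=\epsilon_1\circ\prin$. I expect the only real work to be the multiplicative compatibility: one must correctly invoke the overlapping-shuffle description of the product in $\QSym$ and identify the multiplicity of each length, after which everything reduces to the displayed Vandermonde identity. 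An alternative that sidesteps the combinatorics of the shuffle product is to note that $\prin=\ps^1_k$ is literally the specialization of variables $x_1=\cdots=x_k=1$, $x_{k+1}=\cdots=0$, which is an $\Ff$-algebra homomorphism $\QSym\to\Ff$ for each positive integer $k$ by inspection, hence $\prin:\QSym\to\Ff[k]$ is an algebra map (two polynomials agreeing at all positive integers are equal); compatibility with the coproduct and counit can likewise be checked by evaluating at integers, reducing to the Vandermonde computation above. I would present the specialization argument as the clean route and use the shuffle-product identity only as the conceptual explanation.
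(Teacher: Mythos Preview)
Your final proposed proof --- the specialization argument for multiplicativity, the Vandermonde identity for comultiplication, and the direct evaluation for $\zeta_Q$ --- is correct and is exactly the route the paper takes (the paper dispatches the algebra-homomorphism part in one sentence via the definition of $\ps^1_k$, then checks $(\prin\otimes\prin)\circ\Delta=\Delta\circ\prin$ on $M_\alpha$ using $\sum_j\binom{x}{j}\binom{y}{\ell-j}=\binom{x+y}{\ell}$, and finishes with the evaluation at $k=1$).

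One caution about your alternative ``conceptual'' route via the quasi-shuffle product: the multiplicity you state is wrong, and so is the displayed identity. In a quasi-shuffle the merged positions must respect the order of \emph{both} sequences, so the number of quasi-shuffles of $\alpha$ and $\beta$ of length $\ell(\alpha)+\ell(\beta)-j$ is the trinomial
\[
\frac{(\ell(\alpha)+\ell(\beta)-j)!}{j!\,(\ell(\alpha)-j)!\,(\ell(\beta)-j)!},
\]
not $\binom{\ell(\alpha)}{j}\binom{\ell(\beta)}{j}j!$ (the latter would allow an arbitrary bijection between the merged parts, ignoring order). Correspondingly, your displayed identity fails already for $\ell(\alpha)=\ell(\beta)=1$: your right-hand side gives $\binom{k}{2}+k=\frac{k^2+k}{2}$, not $k^2$. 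With the corrected multiplicity the identity does hold, but since you intend to lead with the specialization argument anyway, this does not affect the soundness of your main proof.
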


We remark that $\prin$ is not a morphism of \emph{combinatorial} Hopf
algebras because it is not graded (i.e., $\prin(M_\alpha)$
is not homogeneous), merely filtered by degree.

\begin{proof}
The definition of $\ps_k^1$ implies that $\prin$ is a homomorphism of
$\Ff$-algebras.  To see that it is in fact a Hopf morphism, we must
show that $(\prin\otimes\prin)\circ\Delta=\Delta\circ\prin$.
It suffices to check this
for the basis $\{M_\alpha\}$.  Let $x=k\otimes 1$ and $y=1\otimes k$; then
\begin{align*}
(\prin\otimes\prin)(\Delta M_\alpha)
&= (\prin\otimes\prin)\left(\sum_{j=0}^\ell M_{(\alpha_1,\dots,\alpha_j)}\otimes M_{(\alpha_{j+1},\dots,\alpha_\ell)}\right)\\
&= \sum_{j=0}^\ell \binom{x}{j} \binom{y}{\ell-j}
=\binom{x+y}{\ell}
= \Delta\binom{k}{\ell} = \Delta(\prin(M_\alpha)).
\end{align*}
(The third equality is a standard identity of binomial coefficients~\cite[Ex.~1.1.17]{EC1} that holds for all nonnegative integers $x,y$; therefore, it is an identity of polynomials.)

For the second assertion of the lemma, we have
$$\zeta_Q(M_\alpha)=\begin{cases}
1&\text{ if } \ell(\alpha)\leq 1\\
0&\text{ if } \ell(\alpha)>1 \end{cases}
~=~ \frac{k(k-1)\cdots(k-\ell+1)}{\ell!}\Big\vert_{k=1}
~=~ \epsilon_1(\prin(M_\alpha)).
$$
\end{proof}

We now come to the main result of this section.  Again, this fact is
not new, but is part of the folklore of (combinatorial) Hopf algebras.

\begin{prop}[Polynomiality]
\label{to-binomial-alg}
For every combinatorial Hopf algebra $(\HH,\zeta)$,
there is a CHA morphism
$$P_\zeta:(\HH,\zeta)\to(\Ff[k],\epsilon_1)$$
mapping $h$ to the unique polynomial $P_{\zeta,h}(k)$ such that
  $$P_{\zeta,h}(k)=\zeta^k(h) \qquad\forall k\in\Zz.$$
Moreover, if $h\in\HH_n$, then
$P_{\zeta,h}(k)$ is a polynomial in $k$ of degree at most $n$.
\end{prop}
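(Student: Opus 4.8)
The plan is to factor $P_\zeta$ through the quasisymmetric functions, using the universal property of $(\QSym,\zeta_Q)$ and Lemma~\ref{prin-morphism}. By the Aguiar--Bergeron--Sottile theorem, the CHA $(\HH,\zeta)$ has a unique morphism $\Psi:(\HH,\zeta)\to(\QSym,\zeta_Q)$; composing with $\prin:\QSym\to\Ff[k]$ gives a Hopf morphism $P_\zeta:=\prin\circ\Psi:\HH\to\Ff[k]$. First I would check that $P_\zeta$ is a morphism of CHAs into $(\Ff[k],\epsilon_1)$: it is a composite of Hopf morphisms, hence a Hopf morphism, and $\epsilon_1\circ P_\zeta=\epsilon_1\circ\prin\circ\Psi=\zeta_Q\circ\Psi=\zeta$ by the second assertion of Lemma~\ref{prin-morphism} and the defining property of $\Psi$.

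Next I would verify the interpolation property $P_{\zeta,h}(k)=\zeta^k(h)$ for all integers $k$. For $h\in\HH_n$, the explicit formula for $\Psi$ gives $\Psi(h)=\sum_{\alpha\compn n}\zeta_\alpha(h)M_\alpha$, so that
$$P_{\zeta,h}(k)=\prin(\Psi(h))=\sum_{\alpha\compn n}\zeta_\alpha(h)\binom{k}{\ell(\alpha)}.$$
Grouping the compositions of $n$ by their length $\ell$, and using that $\sum_{\alpha\compn n,\ \ell(\alpha)=\ell}\zeta_\alpha=\zeta^{\ast\ell}$ restricted to $\HH_n$ (this follows from the definition of $\zeta_\alpha$ via iterated coproduct and the fact that $\Delta^{\ell-1}$ followed by summing over all ways of distributing degree $n$ among $\ell$ tensor factors reassembles the $\ell$-fold convolution $\zeta\ast\cdots\ast\zeta$), we get $P_{\zeta,h}(k)=\sum_{\ell\ge 0}\zeta^\ell(h)\binom{k}{\ell}$ for $h\in\HH_n$, where only finitely many terms are nonzero. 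For $k$ a nonnegative integer this is exactly the binomial-transform identity expressing $\zeta^k=(\epsilon+(\zeta-\epsilon))^{\ast k}$ via the binomial theorem (valid since $\epsilon$ is the convolution identity and $\zeta-\epsilon$ vanishes on $\HH_0$, so it is ``locally nilpotent'' and the sum terminates), giving $P_{\zeta,h}(k)=\zeta^k(h)$. Since $P_{\zeta,h}(k)$ is visibly a polynomial in $k$ of degree at most $n$ (each $\binom{k}{\ell}$ has degree $\ell\le n$, as $\zeta_\alpha(h)=0$ when $\ell(\alpha)>n$ because $\HH$ has no nonzero elements in negative degrees forcing some $\alpha_i$ to vanish), and a polynomial is determined by its values on all of $\Zz$, both the degree bound and the uniqueness follow at once. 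The extension from nonnegative to all integers $k$ is automatic: both sides are polynomials in $k$ agreeing on $\Zz_{\ge 0}$, hence agree everywhere, and this matches the definition $\phi^k=(\phi^{-1})^{-k}$ together with \eqref{character-antipode}.

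The main obstacle is the bookkeeping in the second step: one must carefully justify that $\sum_{\alpha\compn n,\ \ell(\alpha)=\ell}\zeta_\alpha(h)=\zeta^{\ast\ell}(h)$ for $h\in\HH_n$, which amounts to saying that applying the $(\ell-1)$-fold coproduct, projecting each factor to an arbitrary homogeneous component, and summing over all degree-compositions of $n$ is the same as just applying $\Delta^{\ell-1}$ (without projection) and then $\zeta^{\otimes\ell}$ — this is true because $\Delta^{\ell-1}(h)$ already lies in $\bigoplus_{\alpha\compn n,\ \ell(\alpha)=\ell}\HH_{\alpha_1}\tp\cdots\tp\HH_{\alpha_\ell}$ by graded coassociativity (together with connectedness, which kills parts of size $0$), so the projections $\pi_\alpha$ summed over $\alpha$ of length $\ell$ form the identity on that subspace. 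Once this is in hand, the identification of $\sum_\ell\zeta^\ell(h)\binom{k}{\ell}$ with $\zeta^k(h)$ for integer $k$ is the standard fact that convolution exponentiation of a character differing from $\epsilon$ only in positive degrees is computed by the finite binomial series, which I would state and use without reproving. Alternatively — and this is the ``elementary'' route alluded to in the text — one can bypass $\QSym$ entirely and prove directly, by induction on $n$, that $k\mapsto\zeta^k(h)$ agrees with a polynomial: the recursion $\zeta^{k+1}=\zeta^k\ast\zeta$ unwinds via the coproduct into $\zeta^{k+1}(h)=\zeta^k(h)\zeta(1)+\sum'\zeta^k(h_1)\zeta(h_2)$ where the primed sum has $\deg h_1<n$, so by induction each term is a polynomial in $k$, and a first-difference-is-a-polynomial argument (finite differences) upgrades this to $\zeta^k(h)$ itself being polynomial of degree at most $n$; I would present the $\QSym$ factorization as the main proof since it simultaneously yields that $P_\zeta$ is a Hopf morphism, with the inductive argument mentioned as a remark.
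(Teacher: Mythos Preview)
Your overall strategy---defining $P_\zeta=\prin\circ\Psi$ and then verifying the interpolation property---is exactly the paper's approach, and your check that $P_\zeta$ is a CHA morphism is clean. However, there is a genuine error in the bookkeeping step.

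The identity you assert,
\[
\sum_{\substack{\alpha\compn n\\ \ell(\alpha)=\ell}}\zeta_\alpha(h)=\zeta^{\ast\ell}(h)\qquad(h\in\HH_n),
\]
is false. Compositions have strictly positive parts, so $\sum_\alpha\pi_\alpha$ projects $\Delta^{\ell-1}(h)$ onto the summand where \emph{every} tensor factor has positive degree; but $\Delta^{\ell-1}(h)$ certainly has pieces with degree-zero factors (already $\Delta(h)=h\otimes 1+1\otimes h+\cdots$). Connectedness says $\HH_0\cong\Ff$, not that degree-zero pieces vanish in the coproduct, so your justification ``connectedness kills parts of size~$0$'' is incorrect. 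The correct identity is
\[
\sum_{\substack{\alpha\compn n\\ \ell(\alpha)=\ell}}\zeta_\alpha(h)=(\zeta-\epsilon)^{\ast\ell}(h),
\]
since $\zeta-\epsilon$ agrees with $\zeta$ on positive degrees and vanishes on $\HH_0$ (using $\zeta(1)=1$). With this correction your binomial expansion $\zeta^k=(\epsilon+(\zeta-\epsilon))^{\ast k}=\sum_\ell\binom{k}{\ell}(\zeta-\epsilon)^{\ast\ell}$ goes through and the argument is complete---indeed, you already wrote the expansion correctly, which suggests you had the right picture but mislabeled the preceding displayed identity. The paper phrases this same step combinatorially: it groups the terms of $\zeta^k(h)=\sum\zeta(h_1)\cdots\zeta(h_k)$ by their ``essence'' (the sublist of positive-degree factors), which is precisely the passage from $\zeta$ to $\zeta-\epsilon$, and counts $\binom{k}{\ell}$ placements of the $\ell$ essential factors among $k$ slots. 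So once you fix the label, your argument and the paper's coincide.
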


\begin{proof}
We will show that $P_{\zeta,h}(k)=\prin(\Psi(h))$
for all $h\in\HH$.  It is not hard to see that $P_\zeta$
is a vector space homomorphism, so
it is sufficient to consider the
case that $h$ is homogeneous of degree~$n$.  The desired
equality follows from the calculation
\begin{subequations}
\begin{align}
P_{\zeta,h}(k) ~=~ \zeta^k(h) ~&=~ \sum \zeta(h_1)\cdots\zeta(h_k)\label{poly:a}\\
%~&=~ \sum_{\alpha\compn n} \sum_{\substack{(h_{i_1},\dots,h_{i_\ell})\\\deg(h_{i_j})=\alpha_j}} \binom{k}{\ell} \zeta(h_{i_1})\cdots\zeta(h_{i_\ell})\label{poly:ab}\\
~&=~ \sum_{\alpha\compn n} \binom{k}{\ell(\alpha)} \zeta_\alpha(h)\label{poly:b}\\
~&=~ \prin \sum_{\alpha\compn n} \zeta_\alpha(h) M_\alpha\notag
~=~ \prin(\Psi(h)).
\end{align}
\end{subequations}
The sum in \eqref{poly:a} is Sweedler notation.  The only tricky
equality is \eqref{poly:b}; for this, note that each summand
$\zeta(h_1)\cdots\zeta(h_k)$ in \eqref{poly:a} arises from an ordered
list $(h_1,\dots,h_k)$ of homogeneous elements of $\HH$ whose degrees
sum to $n$.  Define the \emph{essence} of a summand
$\zeta(h_1)\cdots\zeta(h_k)$ to be the sublist of $(h_1,\dots,h_k)$
consisting of
elements of strictly positive degree.  
%If we group the summands by their essences
Each equivalence class of summands
with the same essence $(h_{i_1},\dots,h_{i_\ell})$
contains precisely $\binom{k}{\ell}$ summands
(since by the counit property, the positive-degree factors
may occur in any positions) and thus
contributes $\binom{k}{\ell}\zeta(h_{i_1})\cdots\zeta(h_{i_\ell})$
to the sum.  Collecting together all equivalence classes
whose essences have the same degree sequence $\alpha$ contributes
$\binom{k}{\ell(\alpha)}\zeta_\alpha(h)$.

Finally, observe that $\binom{k}{\ell}=\frac{k(k-1)\cdots
(k-\ell+1)}{\ell(\ell-1)\cdots 1}$ is a polynomial in~$k$ of degree~$\ell$,
and that every composition $\alpha\compn n$ has at most $n$ parts, so
$P_{\zeta,h}(k)$ is a polynomial in $k$ of degree at most $n$.
\end{proof}

One can also prove Proposition~\ref{to-binomial-alg} by direct
calculation, for instance, by
showing that $D^{n+1}P_{\zeta,h}(k)=0$, where $D$ is the difference
operator $DP(k)=P(k)-P(k-1)$.

Proposition~\ref{to-binomial-alg} 
provides a way of translating
characters on a Hopf algebra into polynomial invariants
of its elements, just as the Aguiar--Bergeron--Sottile theorem
translates characters into quasisymmetric-function invariants.
Passing from quasisymmetric functions to polynomials
may lose information, but may also lead to more explicit formulas.
%Observe that for $k\in\Zz$, the identity $\zeta^{k}(h) =
%P_{\zeta,h}(k)$ follows from the definition of a CHA morphism;
%therefore, it is actually an identity of polynomials in~$k$.

\subsection{Graphs and the graph Hopf algebra}
We now describe the Hopf algebra that is the subject of this article.
(The literature contains many other instances of Hopf algebras of graphs;
for example, this is not the same Hopf structure as the algebra
studied by Novelli, Thibon and Thi\'ery~\cite{NTT}.)

First, we set up graph-theoretic notation and terminology.  The
notation $G=(V,E)$ means that $G$ is a finite, undirected
graph with vertex set $V$ and edge set $E$; we may then write
$G_{V',E'}$ for the subgraph with vertex set $V'$ and edge set $E'$.
(We could also write simply $(V',E')$, but we often wish to emphasize
that this graph is a subgraph of $G$.)  Loops and multiple edges are allowed.
The sets of vertices and edges of a
graph $G$ will be denoted $V(G)$ and $E(G)$ respectively; no confusion
should arise from this apparent abuse of notation.  
The numbers of vertices, edges and connected components are denoted
$n(G)$, $e(G)$, $c(G)$ respectively (or sometimes $n,e,c$).
The induced
subgraph on a vertex set $T\subseteq V$ will be denoted $G|_T$.
The complement of $T$ will be denoted $\bar T$.  If $S$ and $T$
are vertex sets, then $[S,T]$ denotes the set of all edges with one endpoint
in $S$ and one endpoint in $T$.
The complete graph on $n$
vertices is written $K_n$; note that we permit the possibility $n=0$.

The \emph{rank} $\rk(F)$ of a subset $F\subseteq E(G)$
is the size of any maximal acyclic subset of~$F$.  Meanwhile,
the set $F$ is called a \emph{flat} if, whenever the endpoints of an
edge~$e$ are connected by a path in $F$, then $e\in F$.
(These are precisely the flats of the graphic matroid of $G$.)
Equivalently, $F$ is a flat iff $\rk(F')>\rk(F)$ for every $F'\supsetneq F$.
%Observe that every loop of $G$ belongs to every flat.

For an edge $e\in E$, the \emph{contraction} $G/e$ is obtained by
identifying the two endpoints of $e$ (which is a trivial step if $e$
is a loop) and then removing~$e$.  For an edge set $F\subseteq E$, the
symbol $G/F$ denotes the graph obtained by successively
contracting every edge of $F$ (the order does not matter).  Observe
that if $F$ is a flat, then $G/F$ contains no loops.

An \emph{acyclic orientation} of $G$ is a choice of orientation of all
the edges that admits no directed cycles.  Let
\begin{align*}
\Flat(G) ~&=~ \{\text{flats of } G\},\\
\AO(G) ~&=~ \{\text{acyclic orientations of } G\},\\
a(G) ~&=~ |\AO(G)|.
\end{align*}
Note that if $G$ has one or more loops, then $a(G)=0$;
otherwise, the number of acyclic orientations is unchanged
upon replacing $G$ with its underlying simple graph.

Now we can define our central object of study.
The \emph{graph algebra} is the $\Ff$-vector space $\GG=\bigoplus_{n\geq0}\GG_n$,
where $\GG_n$ is the linear span of isomorphism classes of graphs on~$n$ vertices.
This is a graded connected Hopf algebra, with multiplication
$m(G \otimes H) = G \cdot H = G \uplus H$ (where $\uplus$ denotes disjoint union);
unit $u(1) = K_0$;
comultiplication
$$\Delta(G) = \sum_{T \subseteq V(G)} G|_T \otimes G|_{\bar T},$$
%(where $G|_T$ denotes the induced subgraph on vertex set~$T$,
%where $\bar{T}=V(G)\sm T$);
and counit
$$\epsilon(G) =
\begin{cases}
1 & \text{ if } n(G)=0,\\
0 & \text{ if } n(G)>0.
\end{cases}$$
The graph algebra is commutative and cocommutative; in particular,
its character group $\Xx(G)$ is abelian.
%Since $\GG$ is a graded connected Hopf algebra, its antipode
%we can define $S$ inductively by
%$$S(G) = -\sum_{\0 \neq T \subseteq V(G)}  G|_T \cdot S(G|_{\bar{T}}).$$
%We can use this inductive definition to obtain the formula
%$$S(G) = \sum_{V_1 \uplus \dots \uplus V_m = V(G)} (-1)^m G|_{V_1} \dots G|{V_m},$$
%where the sum is over all ordered partitions of the vertex set into nonempty sets. Given an unordered partition $\pi$ of $V(G)$, if we write $V_\pi = \prod_{B \in \pi} V|_B$ and $|\pi|$ for the number of blocks of $\pi$, then our formula becomes
%$$S(G) = \sum_{\pi} (-1)^{|\pi|} |\pi|! V_\pi,$$
%which is the formula obtained by Schmitt \cite{Schmitt}.%We can use this inductive definition to obtain the formula
As proved by Schmitt~\cite[eq.~(12.1)]{Schmitt}, the antipode in $\GG$ is given
combinatorially by
$$S(G) = \sum_{\pi} (-1)^{|\pi|} |\pi|!\, G_\pi$$
where the sum runs over all ordered partitions $\pi$ of $V(G)$ into nonempty sets (or
``blocks''), and $G_\pi$ is the disjoint union of the
induced subgraphs on the blocks. 
This is a consequence
of Takeuchi's more general formula for connected Hopf algebras \cite[Lemma~14]{Takeuchi};
see also \cite[\S2.3.3 and \S8.4]{AgMa}, \cite[\S5]{MR2103213}, \cite{Montgomery}.

The graph algebra admits two canonical involutions on characters:
$$
\bar\phi(G)=(-1)^{n(G)}\phi(G),\qquad
\tilde\phi(G)=(-1)^{\rk(G)}\phi(G),
$$
where $\rk(G)$ denotes the graph rank of $G$ (that is, the number
of edges in a spanning tree).
As always, $\phi\mapsto\bar\phi$ is an automorphism of $\Xx(G)$; on the other hand,
$\phi\mapsto\tilde\phi$ is not.
The graph algebra was studied by Schmitt \cite{Schmitt}
and appears as the \emph{chromatic algebra} in
the work of Aguiar, Bergeron and Sottile \cite{ABS}, 
where it is equipped with the character
\begin{equation} \label{zeta-eqn}
\zeta(G) = \begin{cases}
1 & \text{ if $G$ has no edges,}\\
0 & \text{ if $G$ has at least one edge.}
\end{cases}
\end{equation}
We will study several characters on $\GG$ other than $\zeta$.

%%%%%%%%%%%%%%%%%%%%%%%%%%%%%%%%%%%%%%%%%%%%%%%%%%%%%%%%%%%%%%%%%%%%%%%%%%%%%%%%
\section{A combinatorial antipode formula}

In this section, we prove a new combinatorial formula for the Hopf
antipode in $\GG$.  Unlike Takeuchi's and Schmitt's formulas, our
formula applies only to $\GG$ and and does not generalize to other
incidence algebras.  On the other hand, our formula involves many
fewer summands, which makes it useful for enumerative formulas
involving characters.  As noted in the introduction, Aguiar and Ardila
have independently discovered a more general antipode formula in the
context of Hopf monoids.

%% removed graph-theory terminology/notation

\begin{thm} \label{antipode-theorem}
Let $G=(V,E)$ be a graph with $n=|V|$.  Then
$$S(G) = \sum_{F\in\Flat(G)} (-1)^{n-\rk(F)} a(G/F) G_{V,F}.$$
\end{thm}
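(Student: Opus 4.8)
The plan is to prove the formula by induction on $n=|V|$ using the recursive characterization of the antipode in equation~\eqref{antipode-pos}, namely that $\sum h_1 S(h_2) = 0$ for $h$ of positive degree. Concretely, write $\Sigma(G) := \sum_{F\in\Flat(G)} (-1)^{n-\rk(F)} a(G/F)\, G_{V,F}$ for the proposed value, and verify that $\Sigma$ satisfies the defining identity
\begin{equation*}
\sum_{T\subseteq V} G|_T \cdot \Sigma(G|_{\bar T}) = u(\epsilon(G)),
\end{equation*}
which for $n>0$ means the left side vanishes. Since the antipode is unique, this suffices. The base case $n=0$ (where $\Sigma(K_0)=K_0=S(K_0)$) and $n=1$ are immediate.

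First I would expand the left-hand side. Fix a graph $G$ on $V$ with $n>0$. Using cocommutativity we may instead verify $\sum_{T\subseteq V} \Sigma(G|_T)\cdot G|_{\bar T} = 0$; each term is indexed by a pair $(T, F)$ where $T\subseteq V$ and $F$ is a flat of the induced subgraph $G|_T$, contributing $(-1)^{|T|-\rk_G(F)} a((G|_T)/F)\, (G|_T)_{T,F}\cdot G|_{\bar T}$. The graph $(G|_T)_{T,F} \cdot G|_{\bar T}$ is the spanning subgraph of $G$ on vertex set $V$ whose edge set is $F$ (note $F\subseteq E(G|_T)\subseteq E(G)$). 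So I would reindex the whole sum by the edge set $F\subseteq E(G)$ together with the auxiliary data, and group terms by the resulting spanning subgraph $G_{V,F}$. The key structural observation is: a pair $(T,F)$ contributes to the coefficient of $G_{V,F}$ precisely when $F\subseteq E(G|_T)$ and $F$ is a flat of $G|_T$. Writing $C$ for the union of the vertex sets of the components of $(V,F)$ that contain at least one edge (equivalently, the non-isolated vertices of $(V,F)$), the condition "$F\subseteq E(G|_T)$ and $F$ a flat of $G|_T$'' translates into: $C\subseteq T$, and $T$ contains no vertex outside $C$ that is joined to $C$ within $G$ by... — more precisely, $F$ is a flat of $G|_T$ iff for every edge $e$ of $G|_T$ whose endpoints are $F$-connected, $e\in F$. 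So the admissible $T$ are exactly those with $C\subseteq T$ and such that adding any further vertex of $T$ does not create, in $G|_T$, a new edge between two $F$-classes or within an $F$-class beyond $F$ itself.

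The main obstacle — and the heart of the proof — will be the resulting coefficient identity for each fixed $F$: I need to show
\begin{equation*}
\sum_{T} (-1)^{|T|-\rk(F)}\, a\big((G|_T)/F\big) \;=\; \begin{cases} (-1)^{n-\rk(F)} a(G/F) & \text{if } F\in\Flat(G),\\ 0 & \text{if } F\notin\Flat(G),\end{cases}
\end{equation*}
the sum over all admissible $T$ as above. When $F$ is a flat of $G$, the only admissible $T$ turns out to be $T=V$ (any extra vertex would, by the flat condition on $G$, be forced to lie in some $F$-class or create a prohibited edge), giving the single term on the right. When $F$ is not a flat of $G$, I would show the admissible $T$'s come in a sign-reversing pattern: there is at least one "free'' vertex $v$ that can be toggled in or out of $T$ without affecting admissibility or the value $a((G|_T)/F)$ — because $v$ lies in no $F$-class and its inclusion adds no edge relevant to the flat condition — and pairing $T$ with $T\triangle\{v\}$ cancels the contributions. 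I would make "$a((G|_T)/F)$ is unaffected by toggling $v$'' precise by checking that $v$ becomes an isolated vertex of $(G|_T)/F$ (since any edge at $v$ in $G|_T$ would violate admissibility), and isolated vertices do not change the count of acyclic orientations.

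Finally I would assemble these pieces: substituting the coefficient identity back, all terms with $F\notin\Flat(G)$ vanish, and for $F\in\Flat(G)$ we must still account for the $T=V$ contribution to $\sum_T \Sigma(G|_T)\cdot G|_{\bar T}$ — but that is the $T=V$, $\bar T=\0$ term, which is exactly $\Sigma(G)\cdot K_0 = \Sigma(G)$, matching the "unknown'' $S(G)$ we solved for; equivalently, separating out $T=V$ shows $\Sigma(G) = -\sum_{T\subsetneq V}\Sigma(G|_T)\cdot G|_{\bar T}$, which by induction and the cancellation above equals $-\sum G_1 S(G_2)$ over proper terms, i.e.\ $\Sigma(G)=S(G)$ by \eqref{antipode-formula}. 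Care is needed to keep the bookkeeping of vertex sets versus edge sets straight, and to handle loops correctly (a flat $F$ has no loops, $G/F$ has no loops, so $a(G/F)$ is a genuine positive count), but these are routine once the sign-reversing involution on non-flats is set up.
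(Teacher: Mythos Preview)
Your inductive framework---show that the proposed formula $\Sigma$ satisfies the defining recursion for $S$---matches the paper's, but the expansion contains a fatal error that invalidates everything downstream.

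You assert that $(G|_T)_{T,F}\cdot G|_{\bar T}$ is the spanning subgraph of $G$ with edge set $F$. This is wrong: $G|_{\bar T}$ is the \emph{induced} subgraph on $\bar T$, so it carries every edge of $G$ with both endpoints in $\bar T$, and the product has edge set $F\cup E(G|_{\bar T})$. (Concretely, take $G=K_2$, $T=\emptyset$, $F=\emptyset$: the term is $\Sigma(K_0)\cdot K_2=K_2$, which lands in $G_{V,E}$, not in $G_{V,\emptyset}$.) Your grouping ``by $F$'' therefore does not collect like terms in $\GG$. Even granting the mis-grouping, the claim that ``when $F$ is a flat of $G$, the only admissible $T$ is $T=V$'' is false: in a loopless graph, once $T$ contains the endpoints of every edge of $F$, the set $F$ is a flat of $G|_T$ for \emph{every} such $T$, since each vertex outside the support of $F$ is a singleton $F$-class and cannot witness a flat violation. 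So neither the grouping nor the proposed toggling involution on a ``free vertex'' does the work you need, and the coefficient identity you wrote (with nonzero right-hand side for flats) is not even the correct target---you need the full sum over $T$ to vanish.

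The paper groups instead by $F'=E(G|_T)\cup F$ (with the roles of $T$ and $\bar T$ swapped relative to yours), checks that any such $F'$ is automatically a flat of $G$, and then---this is the missing idea---unpacks each factor $a((G|_{\bar T})/F)$ as a literal sum over acyclic orientations. The resulting triples $(T,F,\mathcal{O})$ are shown to biject with triples $(F',\mathcal{O}',T')$ where $\mathcal{O}'\in\AO(G/F')$ and $T'$ is a nonempty subset of the source set $S_{\mathcal{O}'}$; the inner alternating sum $\sum_{\emptyset\neq T'\subseteq S_{\mathcal{O}'}}(-1)^{|T'|}=-1$ then produces exactly the coefficient $a(G/F')$. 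The cancellation happens inside each fixed acyclic orientation via its sources, not via toggling isolated vertices.
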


\begin{proof}
We proceed by induction on $n$. If $G$ has no vertices, i.e., $G=K_0$, then $S(K_0)=K_0$ by
\eqref{antipode-zero}.  Indeed, $\Flat(K_0)=\{\0\}$, so the desired formula reduces to $S(K_0) = K_0$.

On the other hand, if $G$ has at least one vertex, then by  \eqref{antipode-formula} we have
\begin{align}
S(G)
&=~ -\sum_{\0 \neq T \subseteq V} G|_T \cdot S(G|_{\bar{T}}) \notag\\
&=~ -\sum_{\0 \neq T \subseteq V} G|_T \ \sum_{F\in\Flat(G|_{\bar{T}})} (-1)^{n-|T|-\rk(F)} a(G|_{\bar{T}}/F) G_{\bar{T}, F} \notag\\
&=~ -\sum_{\0 \neq T \subseteq V} G|_T \ \sum_{F\in\Flat(G|_{\bar{T}})} \ \ \sum_{\OO\in\AO(G|_{\bar{T}}/F)} (-1)^{n-|T|-\rk(F)} G_{\bar{T},F}.\label{before-bijection}
\end{align}

Now we establish a bijection which will allow us to interchange the order of summation.

First, suppose we are given a nonempty vertex set~$T\subseteq V$, a flat~$F$
of~$G|_{\bar{T}}$, and an acyclic orientation~$\OO$
of~$G|_{\bar{T}}/F$.  Let~$F' = E(G|_T) \cup F$; this is
a flat of~$G$.  Moreover, we can construct an
acyclic orientation~$\OO'$ of~$G/F'$ by orienting all edges
in~$[\bar{T}, \bar{T}]$ as in~$\OO$, and orienting all edges in~$[T,
  \bar{T}]$ towards~$\bar{T}$.  Let~$S_{\OO'}$ be the set of sources
of $\OO'$ (that is, vertices with no in-edges); then the image~$T'$
of~$T$ under the contraction of~$F'$ is a nonempty subset
of~$S_{\OO'}$.

Second, suppose we are given a flat~$F'$ of~$G$, an acyclic orientation~$\OO'$
of~$G/F'$, and a set~$T'$ such that $\0 \neq T' \subseteq S_{\OO'}$.
Let~$T$ be the inverse image of~$T'$ under contraction of~$F'$. Then
$F=F'\sm E(G|_T)$ is a flat of~$G|_{\bar{T}}$, and we can
construct an acyclic orientation~$\OO$
of~$G|_{\bar{T}}/F$ by orienting all edges as in~$\OO'$.

It is straightforward to check that these constructions
are inverses.  Therefore, we have a bijection
$$
\left\{(T,F,\OO) ~\Big\vert~ \begin{array}{l}\0\neq T\subseteq V(G)\\ F\in\Flat(G|_{\bar T})\\ \OO\in\AO(G|_{\bar T}/F)\end{array}\right\}
\to
\left\{(F',\OO',T') ~\Big\vert~ \begin{array}{l}F'\in\Flat(G)\\ \OO'\in\AO(G/F')\\
\0\neq T'\subseteq S_{\OO'}
%%\0\neq T'\subseteq V(G/F') typo caught by referee #2, 2/8/2012
\end{array}\right\}
$$
with the following properties:
\begin{itemize}
\item $|T'|$ is the number of components of~$G|_T$;
\item $|T|-|T'|=\rk(G|_T)=\rk(F') - \rk(F)$, so $|T| + \rk(F) = |T'| + \rk(F')$;
\item $G|_T \cdot G_{\bar{T}, F} = G_{V, F'}$ in the graph algebra $\GG$.
\end{itemize}
Therefore, \eqref{before-bijection} gives
\begin{align*}
S(G)~
&=~ -\sum_{F'\in\Flat(G)} \ \ \sum_{\OO'\in\AO(G/F')} \ \ \sum_{\0 \neq T' \subseteq S_{\OO'}} (-1)^{n - |T'| - \rk(F')} G_{V, F'} \\
&=~ -\sum_{F'\in\Flat(G)} (-1)^{n - \rk(F')} G_{V, F'} \sum_{\OO'\in\AO(G/F')} \ \ \sum_{\0 \neq T' \subseteq S_{\OO'}} (-1)^{|T'|} \\
&=~ \sum_{F'\in\Flat(G)} (-1)^{n - \rk(F')} a(G/F') G_{V, F'}.\qedhere
\end{align*}
\end{proof}

\subsection{Inversion of characters}
\label{char-section}
We now apply the antipode formula to give combinatorial interpretations of
several instances of inversion in the group of characters.

\begin{prop}
\label{char-inversion}
Let $\Omega$ be any family of graphs such that 
$G\uplus H\in \Omega$ if and only if $G\in \Omega$ and $H\in \Omega$;
equivalently, such that the function
$$\psi_\Omega(G) =
\begin{cases}
1 & \text{ if } G\in \Omega,\\
0 & \text{ if } G\not\in \Omega
\end{cases}$$
is a character.  Then
$$\psi_\Omega^{-1}(G) = \sum_{F\in\Flat(G):\ G_{V,F}\in \Omega} (-1)^{n-\rk(F)} a(G/F).$$
\end{prop}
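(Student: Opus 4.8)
The plan is to derive this directly from the antipode formula of Theorem~\ref{antipode-theorem} together with the identity $\phi^{-1}=\phi\circ S$ recorded in~\eqref{character-antipode}. The only preliminary point is to confirm that $\psi_\Omega$ is genuinely a character, so that $\psi_\Omega^{-1}$ is defined: $\psi_\Omega$ is linear by fiat (it is prescribed on the basis of isomorphism classes of graphs), and it is multiplicative precisely because of the hypothesis on $\Omega$, since $\psi_\Omega(G\uplus H)=1$ exactly when $G\uplus H\in\Omega$, which by assumption happens exactly when $G\in\Omega$ and $H\in\Omega$, i.e.\ when $\psi_\Omega(G)\psi_\Omega(H)=1$; taking $H=K_0$ (and $\Omega\neq\0$) forces $K_0\in\Omega$, so $\psi_\Omega(u(1))=\psi_\Omega(K_0)=1$. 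Hence $\psi_\Omega\in\Xx(\GG)$.

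Now I would simply compute. By~\eqref{character-antipode}, $\psi_\Omega^{-1}(G)=\psi_\Omega(S(G))$. Substituting the expression for $S(G)$ from Theorem~\ref{antipode-theorem} and using linearity of $\psi_\Omega$,
$$\psi_\Omega^{-1}(G)=\sum_{F\in\Flat(G)}(-1)^{n-\rk(F)}\,a(G/F)\,\psi_\Omega(G_{V,F}).$$
Since $\psi_\Omega(G_{V,F})$ equals $1$ if $G_{V,F}\in\Omega$ and $0$ otherwise, every term with $G_{V,F}\notin\Omega$ vanishes, and what remains is exactly $\sum_{F\in\Flat(G):\ G_{V,F}\in\Omega}(-1)^{n-\rk(F)}a(G/F)$, as claimed.

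There is essentially no obstacle here: all of the combinatorial work has already been done in Theorem~\ref{antipode-theorem}, and this proposition is just the observation that evaluating a character against that formula kills all flats $F$ for which the spanning subgraph $G_{V,F}$ lies outside $\Omega$. The only step meriting an explicit sentence is the verification that $\psi_\Omega$ is a character, which the statement of the proposition already builds in as an alternative hypothesis.
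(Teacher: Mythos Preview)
Your proof is correct and follows exactly the same approach as the paper: apply $\psi_\Omega^{-1}=\psi_\Omega\circ S$ from~\eqref{character-antipode}, substitute the antipode formula of Theorem~\ref{antipode-theorem}, and use the $0/1$ values of $\psi_\Omega$ to restrict the sum to flats with $G_{V,F}\in\Omega$. The only difference is that you spell out the verification that $\psi_\Omega$ is a character, which the paper simply takes as given from the statement.
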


\begin{proof}
From equation~\eqref{character-antipode} and Theorem~\ref{antipode-theorem},
we have
\begin{align*}
\psi_\Omega^{-1}(G) &= \psi(SG)
= \sum_{F} (-1)^{n - \rk(F)} a(G/F) \psi(G_{V, F})\\
&= \sum_{F\in \Omega} (-1)^{n - \rk(F)} a(G/F).\qedhere
\end{align*}
\end{proof}

\begin{exa}
Let $\Omega$ be the family of graphs with no edges.  Then $\psi_\Omega$
is just the character~$\zeta$ of~\eqref{zeta-eqn}, and
$P_{\zeta,G}(k)$ is the chromatic polynomial $\chi(G;k)$ of~$G$.  Therefore,
Proposition~\ref{char-inversion} implies that
$\psi_\Omega^{-1}(G)=\chi(G;-1)= (-1)^n a(G)$, a classic theorem of
Stanley~\cite{AO}.
\end{exa}

\begin{exa}
Let $\Omega$ be the family of acyclic graphs, and let $\alpha=\psi_\Omega$. Then
$$\alpha^{-1}(G) = \sum_{\text{acyclic flats $F$}} (-1)^{n-\rk(F)} a(G/F).$$

We examine two special cases.  First, suppose that $G=C_n$, the cycle of length~$n$.  The acyclic flats of $G$ are just the sets
of $n-2$ or fewer edges, so an elementary calculation (which we omit)
gives $\alpha^{-1}(C_n)= (-1)^n+1$, the Euler characteristic of an $n$-sphere.

For many other families~$\Omega$, the $\Omega$-free flats of $C_n$ are just its
flats, i.e., the edge sets of cardinality $\neq n-1$.  In such cases,
the same omitted calculation gives $\psi_\Omega^{-1}(C_n)=(-1)^n$.

Second, suppose that $G=K_n$.  Now the acyclic flats of $G$ are matchings, i.e.,
sets of edges that cover no vertex more than once.  For
$0\leq m\leq\lfloor n/2\rfloor$, the number of $m$-edge matchings is
$n!/(2^m(n-2m)!m!)$, and contracting each such matching yields a graph
whose underlying simple graph is $K_{n-m}$.  Therefore
$$\alpha^{-1}(K_n) = \sum_{m=0}^{\lfloor n/2 \rfloor} (-1)^{n-m}
\frac{n!}{2^m(n-2m)!m!} (n-m)!.$$
Starting at $n=1$, these numbers are as follows:
$$-1,\ 1,\ 0,\ -6,\ 30,\ -90,\ 0,\ 2520,\ -22680,\ 113400,\ 0,\ -7484400,\ \dots.$$
This is sequence~\href{http://oeis.org/A009775}{A009775} in \cite{EIS}, for which the exponential generating
function is $-\tanh(\ln(1+x))$.
\end{exa}

\begin{exa} \label{avoidance-character}
Fix any connected graph $H$.  Say that $G$ is \emph{$H$-free} if it has no
subgraph isomorphic to $H$.  (This is a stronger condition than saying that $G$ has no
\emph{induced} subgraph isomorphic to $H$.)
The corresponding \emph{avoidance character} $\eta_H$ is defined by
$$
\eta_H(G) = \begin{cases}
1 & \text{ if $G$ is $H$-free},\\
0 & \text{ otherwise}.
\end{cases}$$
Avoidance characters are special cases of the characters described by Proposition~\ref{char-inversion};
specifically, $\eta_H=\psi_\Omega$, where $\Omega$ is the family of graphs with no subgraph isomorphic
to~$H$.
For instance, $\eta_{K_1}=\epsilon$ and $\eta_{K_2}=\zeta$;
more generally, if $H=K_{m,1}$ is the complete bipartite graph with
partite sets of sizes $m$ and~1, then the corresponding avoidance character $\eta_H$
detects whether or not $G$ has maximum degree strictly less than~$m$.
In general, for an avoidance character $\eta_H$, the summands in Proposition~\ref{char-inversion}
include only the $H$-free flats.

In the case that $T$ is a tree, every subset $F\subseteq E(T)$
is a flat, and every contraction
$T/F$ is acyclic, so all $2^{e(T/F)}$ orientations of~$T/F$ are acyclic.
Therefore, Proposition~\ref{char-inversion} simplifies to
  $$\eta_H^{-1}(T) ~=~ \sum_F (-1)^{r+1-|F|} 2^{r-|F|}
  ~=~ -\sum_F (-2)^{r-|F|}$$
where $r=r(T)=n(T)-1$, and both sums run over all $H$-free forests
$F\subseteq T$.
\end{exa}

\begin{exa}
For every avoidance character $\eta_H$,
the polynomial $P_{\eta_H}(G;k)$ counts the number of $k$-colorings of~$G$
such that every color-induced subgraph is $H$-free. As an extreme example,
if $G=H$, then $P_{\eta_G}(G;k)=k^{n(G)}-k$, because the non-$G$-free
colorings are precisely those using only one color.

If $H=K_{m,1}$, then $P_m(G;k)=P_{\eta_H}(G;k)$
counts the $k$-colorings such that no vertex
belongs to $m$ or more monochromatic edges, or equivalently such that
no color-induced
subgraph has a vertex of degree $\geq m$.
We call this the \emph{degree-chromatic polynomial}; if $m=1$, then $P_1(G;k)$
is just the usual chromatic polynomial.  In general, two trees with the same
number of vertices need not have the same degree-chromatic polynomials
for all $m$ (though they do share the same chromatic polynomial).  For example, if
$\mathsf{Z}$ is the three-edge path on four vertices and $\mathsf{Y}=K_{3,1}$
is the three-edge star, then $P_2(\mathsf{Z};k)=k^4-2k^2+k$ and
$P_2(\mathsf{Y};k)=k^4-3k^2+2k$.

In an earlier version of this article, we had conjectured, based on
experimental evidence, that if $T$ is any tree on $n$
vertices and $m<n$, then
  $$P_m(T;k)=k^n-\sum_{v\in V(T)}\binom{d_T(v)}{m}k^{n-m}+(\text{lower
  order terms})$$
where $d_T(v)$ denotes the degree of vertex $v$.
This conjecture has since been proven combinatorially by Diego
Cifuentes~\cite{Cif}.
\end{exa}

%%%%%%%%%%%%%%%%%%%%%%%%%%%%%%%%%%%%%%%%%%%%%%%%%%%%%%%%%%%%%%%%%%%%%%%%%%%%%%%%
\section{Tutte characters}

The \emph{Tutte polynomial} $T_G(x,y)$ is a powerful graph invariant
with many important properties (for a comprehensive survey, see~\cite{BryOx}).
It is defined in closed form by the formula
$$
T_G(x,y) ~=~ \sum_{A\subseteq E(G)} (x-1)^{\rk(G)-\rk(A)} (y-1)^{\nul(A)}
$$
where $\rk(A)$ is the graph rank of $A$, and $\nul(A)=|A|-\rk(A)$ (the \emph{nullity} of~$A$).
The Tutte polynomial is a universal
deletion-contraction invariant in the sense that every
graph invariant satisfying a deletion-contraction recurrence can be
obtained from $T_G(x,y)$ via a standard ``recipe'' \cite[p.~340]{Bollobas}.
In particular, $T_G(x,y)$ is multiplicative on connected components, so we can regard it as a character on the graph algebra:
$$\tau_{x,y}(G) = T_G(x,y).$$
We may regard $x,y$ either as indeterminates or as (typically integer-valued) parameters.
It is often more convenient to work with the \emph{rank-nullity polynomial}
\begin{equation} \label{R-to-T}
R_G(x,y) = \sum_{A\subseteq E} (x-1)^{\rk(A)} (y-1)^{\nul(A)} = (x-1)^{\rk(G)}T_G(x/(x-1),y)
\end{equation}
which carries the same information as $T_G(x,y)$, and is also multiplicative on connected components, hence is a character on $\GG$.
Note that $R_G(1,y)=1$, and that
\begin{equation} \label{T-to-R}
T_G(x,y) = (x-1)^{\rk(G)}R_G(x/(x-1),y).
\end{equation}

Let $\rho_{x,y}$ denote the function $G\mapsto R_G(x,y)$, viewed as a character of the graph algebra $\GG$.

For later use, we record the relationship between $\rho$ and $\tau$:
\begin{equation} \label{tau-and-rho}
\tau_{x,y} = (x-1)^{\rk(G)} \rho_{x/(x-1),y}, \qquad
\rho_{x,y} = (x-1)^{\rk(G)} \tau_{x/(x-1),y}.
\end{equation}
In particular,
\begin{equation} \label{tau-and-rho-special}
\tau_{2,y} = \rho_{2,y}\qandq
\tau_{0,y} = \widetilde{\rho_{0,y}}.
\end{equation}

\subsection{The main theorem on Tutte characters}
Let
$$P_{x,y}(G;k)=\rho_{x,y}^k(G)$$
be the image of $G$ under the CHA morphism
$(\GG,\rho_{x,y})\to\Ff(x,y)[k]$ of Proposition~\ref{to-binomial-alg}.
Thus $P_{x,y}(G;k)\in\Cc(x,y)[k]$.  The main theorem of this section
is that $P_{x,y}(G;k)$ is itself essentially an
evaluation of the Tutte polynomial.
\begin{thm}
\label{tutte-char}
We have
$$P_{x,y}(G;k)
= k^{c(G)} (x-1)^{\rk(G)} T_G\left(\frac{k+x-1}{x-1}, y\right).$$
\end{thm}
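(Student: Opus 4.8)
The plan is to use the characterization of $P_{x,y}(G;k)$ as the unique polynomial agreeing with the convolution power $\rho_{x,y}^k(G)$ for all integers $k$, and to verify the claimed formula by computing $\rho_{x,y}^k(G)$ directly for positive integers $k$. By Proposition~\ref{to-binomial-alg}, two polynomials in $k$ that agree at all nonnegative integers are equal, so it suffices to establish the identity when $k$ is a positive integer. Expanding the convolution power via the iterated coproduct of $\GG$, we have
$$\rho_{x,y}^k(G)=\sum_{(V_1,\dots,V_k)}\ \prod_{i=1}^k R_{G|_{V_i}}(x,y),$$
the sum over all ordered set partitions of $V(G)$ into $k$ possibly-empty blocks (this is exactly the $k$-fold iterated comultiplication described in Section~2, since $\Delta(G)=\sum_{T}G|_T\otimes G|_{\bar T}$).

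Next I would unpack each factor $R_{G|_{V_i}}(x,y)=\sum_{A_i\subseteq E(G|_{V_i})}(x-1)^{\rk(A_i)}(y-1)^{\nul(A_i)}$ using the rank-nullity definition~\eqref{R-to-T}. Interchanging the order of summation, a choice of ordered block sequence $(V_1,\dots,V_k)$ together with edge subsets $A_i\subseteq E(G|_{V_i})$ is the same datum as a pair consisting of an edge subset $A=\bigsqcup A_i\subseteq E(G)$ and a proper $k$-coloring (in the chromatic sense) of the vertices of $G$ relative to the spanning subgraph $(V,A)$ — that is, a function $V\to[k]$ constant on no component-crossing edge, equivalently a function $V\to[k]$ such that every edge of $A$ is monochromatic and we are free on non-edges of $A$. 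Here the key point is that an ordered set partition assigning $V_i$ the color $i$ must keep each block $G|_{V_i}$ self-contained, so the only constraint on $A$ is that it be partitioned compatibly: $A$ can be an arbitrary subset of $E(G)$, and the number of ways to refine $(V_1,\dots,V_k)$ is the number of colorings $V\to[k]$ whose monochromatic classes each contain the relevant $A$-components. After the interchange, for a fixed $A\subseteq E(G)$ the sum over compatible colorings is $k^{c(V,A)}$, where $c(V,A)$ is the number of connected components of the spanning subgraph $(V,A)$, since each component of $(V,A)$ may independently receive any of the $k$ colors. Thus
$$\rho_{x,y}^k(G)=\sum_{A\subseteq E(G)} k^{c(V,A)}(x-1)^{\rk(A)}(y-1)^{\nul(A)}.$$

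Finally I would recognize the right-hand side as a Tutte evaluation. Writing $c(V,A)=n(G)-\rk(A)=c(G)+\rk(G)-\rk(A)$ (the standard identity relating components and rank), we get
$$\rho_{x,y}^k(G)=k^{c(G)}\sum_{A\subseteq E(G)} k^{\rk(G)-\rk(A)}(x-1)^{\rk(A)}(y-1)^{\nul(A)}
=k^{c(G)}(x-1)^{\rk(G)}\sum_{A\subseteq E(G)}\left(\frac{k}{x-1}\right)^{\rk(G)-\rk(A)}(y-1)^{\nul(A)},$$
which by the closed-form definition of $T_G$ is exactly $k^{c(G)}(x-1)^{\rk(G)}T_G\!\left(\frac{k}{x-1}+1,\,y\right)=k^{c(G)}(x-1)^{\rk(G)}T_G\!\left(\frac{k+x-1}{x-1},\,y\right)$. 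Since both sides are polynomials in $k$ (the left by Proposition~\ref{to-binomial-alg}, the right manifestly, once one clears the apparent denominator $x-1$ against the $(x-1)^{\rk(G)}$ prefactor) agreeing at all positive integers, they are equal as elements of $\Cc(x,y)[k]$. The main obstacle I anticipate is the bookkeeping in the combinatorial interchange step: being careful that empty blocks are allowed (so the coloring count is genuinely $k^{c(V,A)}$ and not something smaller), and that the edge subset $A$ ranges freely over $E(G)$ rather than over some restricted family — this is where the cocommutative, "induced subgraph on each block" structure of $\GG$ is doing the real work, and it deserves a clean justification rather than a hand-wave.
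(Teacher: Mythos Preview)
Your argument is correct, and in fact more direct than the paper's. Both proofs begin identically: reduce to positive integers $k$, expand $\rho_{x,y}^k(G)$ over ordered $k$-block partitions of $V(G)$, and unpack each $R_{G|_{V_i}}$ to arrive at
\[
\rho_{x,y}^k(G)=\sum_{f:V\to[k]}\ \sum_{A\subseteq M(f,G)} (x-1)^{\rk(A)}(y-1)^{\nul(A)}.
\]
At this point the paper takes a longer route: it verifies a deletion--contraction recurrence for $P_{x,y}(G;k)$ (checking edgeless graphs, loops, cut-edges, and ordinary edges separately) and then invokes the Tutte ``recipe theorem'' of Bollob\'as to identify the result. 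You instead interchange the order of summation, observing that for fixed $A\subseteq E(G)$ the number of $f$ with $A\subseteq M(f,G)$ is $k^{c(V,A)}$, and then read off the Tutte polynomial directly from its corank--nullity form. Your route is shorter and entirely self-contained, avoiding the external recipe theorem; the paper's route has the compensating virtue of exhibiting the deletion--contraction structure of $P_{x,y}(G;k)$ explicitly, which may be of independent interest. One terminological quibble: calling the compatible colorings ``proper $k$-colorings relative to $(V,A)$'' is misleading, since the condition is that $A$-edges be \emph{monochromatic}, the opposite of proper; your parenthetical clarification rescues this, but it would be cleaner to say simply ``functions $f:V\to[k]$ constant on each component of $(V,A)$.''
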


\begin{proof}
Since $P_{x,y}(G;k)$ is a polynomial in $k$, it suffices to prove
that the identity holds for all positive integer values of~$k$.

We have
\begin{subequations}
\begin{align}
P_{x,y}(G;k) &= \rho_{x,y}^k(G) = \sum_{V_1\uplus\cdots\uplus V_k=V(G)}\quad \prod_{i=1}^k \rho_{x,y}(G|_{V_i}) \label{Pxy-sum}\\
&= \sum_{V_1\uplus\cdots\uplus V_k=V(G)}\quad \prod_{i=1}^k\quad \sum_{A_i\subseteq E(G|_{V_i})} (x-1)^{\rk(A_i)}(y-1)^{\nul(A_i)}\\
&= \sum_{f:V\to[k]}\quad \prod_{i=1}^k\quad \sum_{A_i\subseteq f^{-1}(i)} (x-1)^{\rk(A_i)}(y-1)^{\nul(A_i)}\\
&= \sum_{f:V\to[k]}\quad \sum_{A\subseteq M(f,G)} (x-1)^{\rk(A)}(y-1)^{\nul(A)}
\end{align}
\end{subequations}
where $M(f,G)$ denotes the set of $f$-monochromatic edges,
that is, edges $e=uv\in E(G)$ such that $f(u)=f(v)$ (including, in particular, all loops).
Here the sum is over all ordered partitions of $V(G)$ into pairwise disjoint subsets (possibly empty).
In order to find a recipe for $P_{x,y}(G;k)$ as a Tutte specialization, we need to know its value on edgeless graphs,
and how it behaves with respect to deleting a loop, deleting a cut-edge, or deletion and contraction of an ``ordinary'' edge.

\textit{Step 1: Edgeless graphs.} If $E(G)=\0$, then $R_H(x,y)=1$ for every subgraph $H\subseteq G$.  Therefore, every summand in \eqref{Pxy-sum} is 1, so $P_{x,y}(G;k)$ is just the number of ordered partitions with $n=|V(G)|$ parts, that is:
\begin{equation}
\label{recipe-alpha}
P_{x,y}(\overline{K_n};k)=k^n.
\end{equation}

\textit{Step 2: Loops.} Suppose $G$ has a loop $\ell$.  For every ordered partition $V(G)=V_1\uplus\cdots\uplus V_k$, let $V_i$ be the part that contains the endpoint of~$\ell$.  Then
$\rho_{x,y}(G|_{V_i}) = y\rho_{x,y}((G-\ell)|_{V_i})$, and we conclude that
\begin{equation}
\label{recipe-y}
P_{x,y}(G;k)=y\cdot P_{x,y}(G-\ell;k).
\end{equation}

\textit{Step 3: Nonloop edges.}
Suppose $G$ has a nonloop edge $e$ (possibly a cut-edge) with endpoints $u,v$.  For a function $f:V\to[k]$, if $f(u)\neq f(v)$ then $M(f,G-e)=M(f,G)$,
while if $f(u)=f(v)$ then $M(f,G-e)=M(f,G)\sm\{e\}$.  For every edge set $A\subseteq M(f,G)$ containing $e$, the edge set
$B=A\sm\{e\}\subseteq M(f,G/e)$ satisfies
$\nul(B)=\nul(A)$ and $\rk(B)=\rk(A)-1$; moreover,
the correspondence between $A$ and $B$ is a bijection. Therefore,
\begin{align*}
P_{x,y}(G;k) &= \sum_{f:V\to[k]} \sum_{A\subseteq M(f,G)} (x-1)^{\rk(A)}(y-1)^{\nul(A)},\\
P_{x,y}(G-e;k) &= \sum_{f:V\to[k]} \sum_{A\subseteq M(f,G-e)} (x-1)^{\rk(A)}(y-1)^{\nul(A)},\\
P_{x,y}(G;k)-P_{x,y}(G-e;k) &= \sum_{\substack{f:V\to[k]:\\e\in M(f,G)}} \sum_{\substack{A\subseteq M(f,G):\\e \in A}} (x-1)^{\rk(A)}(y-1)^{\nul(A)}\\
&= \sum_{\substack{f:V\to[k]:\\f(u)=f(v)}} \sum_{B\subseteq M(f,G/e)} (x-1)^{\rk(B)+1}(y-1)^{\nul(B)}\\
&= (x-1) P_{x,y}(G/e;k).
\end{align*}
To put this recurrence in a more familiar form,
\begin{equation}
\label{del-contr}
P_{x,y}(G;k)=P_{x,y}(G-e;k)+(x-1)P_{x,y}(G/e;k).
\end{equation}

\textit{Step 4: Cut-edges.} Now suppose that $e=uv$ is a cut-edge.  We have
$$P_{x,y}(G-e;k) = \sum_{f:V\to[k]} \sum_{A\subseteq M(f,G-e)} (x-1)^{\rk(A)}(y-1)^{\nul(A)}$$
and
$$
P_{x,y}(G/e;k) = \sum_{f:V\to[k]} \sum_{A\subseteq M(f,G/e)} (x-1)^{\rk(A)}(y-1)^{\nul(A)}.
$$
Let $H$ be the connected component of $G-e$ containing $u$, and let
$H'=G-H$.  Then $E(G-e)=E(H)\cup E(H')$.  Let the cyclic group
$\Zz_k$ act on colorings $f$ by cycling the colors of vertices in $H$
modulo~$k$ and fixing the colors of vertices in $H'$; i.e., if we
fix a generator $\gamma$ of $\Zz_k$, then $(\gamma^jf)(w)= f(w)+j\pmod{k}$
for $w\in V(H)$, while $(\gamma f)(w) = f(w)$ for $w\in V(H')$.  Then
the set $M(f,G-e)$ is invariant under the action of $\Zz_k$; moreover,
each orbit has size~$k$ and has exactly one coloring for which
$f(u)=f(v)$.  In that case, contracting the edge $e$ does not change
the nullity or rank.  Therefore, $P_{x,y}(G/e;k)=k^{-1}P_{x,y}(G-e;k)$,
which when combined with~\eqref{del-contr} yields
\begin{align}
P_{x,y}(G;k) &~=~ P_{x,y}(G-e;k)+(x-1) P_{x,y}(G-e;k)/k\notag\\
&~=~ \left(\frac{k+x-1}{k}\right) P_{x,y}(G-e;k).\label{recipe-x}
\end{align}  

Now combining \eqref{recipe-alpha}, \eqref{recipe-y}, \eqref{del-contr}, and \eqref{recipe-x} with the ``recipe theorem'' \cite[p.~340]{Bollobas} (replacing Bollob\'as' $x,y,\alpha,\sigma,\tau$ with $(k+x-1)/k$, $y$, $k$, $1$, $x-1$ respectively)
gives the desired result.
\end{proof}

\subsection{Applications to Tutte polynomial evaluations}
Theorem~\ref{tutte-char} has many enumerative consequences,
some familiar and some less so.  Many of the formulas we
obtain resemble those in the work of Ardila~\cite{Ardila};
the precise connections remain to be investigated.

First, observe that setting $x=y=t$ in Theorem~\ref{tutte-char} yields
\begin{align}
\rho_{t,t}^k(G)
~&=~ P_{t,t}(G;k)
 ~=~ k^{c(G)} (t-1)^{\rk(G)} T_G\left(\frac{k+t-1}{t-1}, t\right)\notag\\
~&=~ k^{c(G)} \bar\chi_G(k;t) \label{crapo}
\end{align}
where $\bar\chi$ denotes Crapo's coboundary polynomial; see
\cite[p.~236]{Cyclo} and \cite[\S6.3.F]{BryOx}.  (As a note, the bar in
the notation $\bar\chi$ has no relation to the bar involution
on $\Xx(\GG)$.)

\begin{cor}
For $k\in\Zz$ and $y$ arbitrary, the Tutte characters $\tau_{2,y}$ and $\tau_{0,y}$ satisfy the identities
\begin{align}
\left(\tau_{2,y}\right)^k(G) &= k^{c(G)} T_G(k+1, y), \label{two-formula}\\
\left(\widetilde{\tau_{0,y}
}\right)^k(G) &= k^{c(G)} (-1)^{\rk(G)} T_G(1-k,y). \label{zero-formula}
\end{align}
In particular,
%\textbf{Double-check this -- eqn.~\eqref{zero-formula} doesn't make sense if you plug in $k=1$.}
$(\widetilde{\tau_{0,y}})^{-1} = \overline{\tau_{2,y}}.$
\end{cor}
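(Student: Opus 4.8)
The plan is to deduce all three assertions of the Corollary from Theorem~\ref{tutte-char} by specializing the parameter $x$ and translating between the characters $\tau_{x,y}$ and $\rho_{x,y}$ via the relations in \eqref{tau-and-rho-special}. Recall from Proposition~\ref{to-binomial-alg} that the polynomial $P_{x,y}(G;k)$ satisfies $P_{x,y}(G;k)=\rho_{x,y}^{\,k}(G)$ for \emph{every} $k\in\Zz$, not merely positive ones, so all the identities below may be read off by plugging integer values into the polynomial identity of Theorem~\ref{tutte-char}.

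First I would prove \eqref{two-formula}. By \eqref{tau-and-rho-special} we have $\tau_{2,y}=\rho_{2,y}$ as elements of $\Xx(\GG)$, so $(\tau_{2,y})^k(G)=\rho_{2,y}^{\,k}(G)=P_{2,y}(G;k)$ for all $k\in\Zz$. Setting $x=2$ in Theorem~\ref{tutte-char} gives
$$P_{2,y}(G;k)=k^{c(G)}(2-1)^{\rk(G)}\,T_G\!\left(\tfrac{k+2-1}{2-1},y\right)=k^{c(G)}\,T_G(k+1,y),$$
which is exactly \eqref{two-formula}. Next, for \eqref{zero-formula}, the key observation is that although $\phi\mapsto\widetilde\phi$ is not a group automorphism, it is an involution as a map on functions, since $\widetilde{\widetilde\phi}(G)=(-1)^{\rk(G)}(-1)^{\rk(G)}\phi(G)=\phi(G)$. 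Combined with $\tau_{0,y}=\widetilde{\rho_{0,y}}$ from \eqref{tau-and-rho-special}, this yields $\widetilde{\tau_{0,y}}=\rho_{0,y}$, an honest character, so $(\widetilde{\tau_{0,y}})^k(G)=\rho_{0,y}^{\,k}(G)=P_{0,y}(G;k)$ for all $k\in\Zz$. Setting $x=0$ in Theorem~\ref{tutte-char} then gives
$$P_{0,y}(G;k)=k^{c(G)}(0-1)^{\rk(G)}\,T_G\!\left(\tfrac{k-1}{-1},y\right)=k^{c(G)}(-1)^{\rk(G)}\,T_G(1-k,y),$$
establishing \eqref{zero-formula}.

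Finally, the last assertion is the special case $k=-1$ of \eqref{zero-formula}: for every graph $G$,
$$(\widetilde{\tau_{0,y}})^{-1}(G)=(-1)^{c(G)}(-1)^{\rk(G)}\,T_G(2,y)=(-1)^{\,c(G)+\rk(G)}\,T_G(2,y).$$
Since $\rk(G)=n(G)-c(G)$, the exponent $c(G)+\rk(G)$ equals $n(G)$, so the right-hand side is $(-1)^{n(G)}T_G(2,y)=(-1)^{n(G)}\tau_{2,y}(G)=\overline{\tau_{2,y}}(G)$ by the definition of the bar involution, which is the claimed identity $(\widetilde{\tau_{0,y}})^{-1}=\overline{\tau_{2,y}}$.

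I do not expect a genuine obstacle: the whole argument is bookkeeping once Theorem~\ref{tutte-char} is available. The three points that require a little care are (a) recognizing that $\widetilde{\tau_{0,y}}$ really is the honest character $\rho_{0,y}$, which is legitimate precisely because $\phi\mapsto\widetilde\phi$ is involutive on functions even though it is not compatible with convolution; (b) that the polynomial identity of Theorem~\ref{tutte-char} may be evaluated at negative integers such as $k=-1$, which is the content of Proposition~\ref{to-binomial-alg}; and (c) the sign reconciliation via $n(G)=c(G)+\rk(G)$ in the last step.
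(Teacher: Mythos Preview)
Your proof is correct and follows essentially the same approach as the paper: specialize $x=2$ and $x=0$ in Theorem~\ref{tutte-char}, translate between $\tau$ and $\rho$ via \eqref{tau-and-rho-special}, and then set $k=-1$ in \eqref{zero-formula} together with $n(G)=c(G)+\rk(G)$ to obtain the final identity. The paper's proof is more terse, but your additional remarks (that $\phi\mapsto\widetilde\phi$ is involutive, and that Proposition~\ref{to-binomial-alg} justifies evaluation at negative integers) are correct and make explicit what the paper leaves implicit.
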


\begin{proof}
%Recall from \eqref{tau-and-rho-special} that
%$\rho_{2,y} = \tau_{2,y}$ and $\rho_{0,y} = \widetilde{\tau_{0,y}}$.
Setting $x=2$ or $x=0$ in Theorem~\ref{tutte-char} and applying
\eqref{tau-and-rho-special}, we obtain respectively
\begin{align*}
\left(\tau_{2,y}\right)^k(G) &= \left(\rho_{2,y}\right)^k(G) = P_{2,y}(G;k) = k^{c(G)} T_G(k+1, y),\\
\left(\widetilde{\tau_{0,y}}\right)^k(G) &= \left(\rho_{0,y}\right)^k(G) = P_{0,y}(G;k) = k^{c(G)} (-1)^{\rk(G)} T_G(1-k,y).
\end{align*}
establishing \eqref{two-formula} and \eqref{zero-formula}.
In particular, setting $k=-1$ in \eqref{zero-formula} gives
$$(\widetilde{\tau_{0,y}})^{-1}(G) = (-1)^{c(G)} (-1)^{\rk(G)} T_G(2,y) = (-1)^{n(G)}\tau_{2,y}(G) = \overline{\tau_{2,y}}(G).\qedhere$$
\end{proof}

Similarly, we can find combinatorial interpretations of convolution
powers of the characters $\tau_{2,2}$, $\tau_{2,0}$,
$\widetilde{\tau_{0,2}}$, and $\widetilde{\tau_{0,0}}$.  In the last
case, we recover the standard formula for the chromatic polynomial as
a specialization of the Tutte polynomial (note that
$\widetilde{\tau_{0,0}}=\tau_{0,0}=\zeta$).%, because these characters both vanish on every graph with at least one edge.

One can deduce combinatorial interpretations of other evaluations
of the Tutte polynomial.
If $G$ is connected, then substituting $y=2$ and $k=2$ into \eqref{two-formula} yields $(\tau_{2,2})^2(G)=2T_G(3,2)$, or
\begin{equation}
\label{T32}
T(G;3,2)% ~=~ \frac{P_{2,2}(G;2)}{2}
 ~=~ \frac{(\tau_{2,2}*\tau_{2,2})(G)}{2}
~=~ \sum_{U\subseteq V(G)} 2^{e(G|_U)+e(G|_{\bar U})-1}.
\end{equation}
That is, $2T(G;3,2)$ counts the pairs $(f,A)$, where $f$ is
a 2-coloring of~$G$ and $A$ is a set of $f$-monochromatic edges.

In order to interpret more general powers of Tutte characters, we use
\eqref{tau-and-rho} to expand the convolution power $\rho_{x,y}^k(G)$ in
Theorem~\ref{tutte-char}:
\begin{align*}
k^{c(G)} (x-1)^{\rk(G)} &T_G\left(\frac{k+x-1}{x-1}, y\right)
~=~ \rho^k_{x,y}(G)\\
~&=~ \sum_{V_1 \uplus \dots \uplus V_k = V(G)} \ \ \prod_{i=1}^k \rho_{x,y}(G_i)\\
~&=~ \sum_{V_1 \uplus \dots \uplus V_k = V(G)} \ \ \prod_{i=1}^k (x-1)^{\rk(G_i)} \tau_{x/(x-1),y}(G_i)
\end{align*}
where $G_i=G|_{V_i}$.  Note that in the special case $G=K_n$, we have $G_i\cong K_{|V_i|}$ and $\rk(G_i)=|V_i|-1$
for all $i$, so the equation simplifies to
$$k(x-1)^{n-1}T_{K_n}\left(\frac{k+x-1}{x-1}, y\right)=(x-1)^{n-k}\tau_{x/(x-1),y}(K_n)$$
or
\begin{equation} \label{tau-Kn-power}
(x-1)^{k-1} T_{K_n}\left(\frac{k+x-1}{x-1}, y\right)
= k^{-1} (\tau_{x/(x-1),y})^k(K_n).
\end{equation}
This equation has further enumerative consequences:
setting $x=2$ gives
\begin{align} \label{Kn-x2}
T_{K_n}(k+1,y)
%&= \frac{1}{k} \sum_{V_1 \uplus \dots \uplus V_k = [n]} \tau_{2,y}(K_{|V_1|}) \dots \tau_{2,y}(K_{|V_k|})\\
&= \frac{1}{k} \sum_{a_1+\cdots+a_k=n} \frac{n!}{a_1!a_2!\dots a_k!} \tau_{2,y}(K_{a_1}) \dots \tau_{2,y}(K_{a_k}).
\end{align}
Setting $y=0$ in \eqref{Kn-x2} and observing that $\tau_{2,0}(K_a)=a!$ (the number
of acyclic orientations of $K_a$), we get
$T_{K_n}(k+1,0)=(n+k-1)!/k!$.  This is not a new formula; it follows from
the standard specialization of the Tutte polynomial
to the chromatic polynomial~\cite[Prop.~6.3.1]{BryOx},
together with the well-known formula
$k(k-1)\cdots(k-n+1)$ for the chromatic polynomial of~$K_n$.
On the other hand, setting $y=2$ in~\eqref{Kn-x2}, and recalling that $\tau_{2,2}(K_a)=2^{|E(K_a)|}=2^{\binom{a}{2}}$, gives
\begin{equation} \label{Kn-x2-y2}
T_{K_n}(k+1,2)
~=~ k^{-1}\!\!\! \sum_{a_1+\cdots+a_k=n} \frac{n!}{a_1!a_2!\dots a_k!} 2_{\phantom{X}}^{\binom{a_1}{2}+\cdots+\binom{a_k}{2}}.
\end{equation}
This formula may be obtainable from the generating function
for the Crapo coboundary polynomials of complete graphs, as computed
by Ardila~\cite[Thm.~4.1]{Ardila};
see also sequence~\href{http://oeis.org/A143543}{A143543} in \cite{EIS}.  Notice that
setting $k=2$ in~\eqref{Kn-x2-y2} recovers~\eqref{T32}.

It is natural to ask what happens when we set $x=1$,
since this specialization of the Tutte polynomial
has well-known combinatorial interpretations
in terms of, e.g., the chip-firing game~\cite{Merino}
and parking functions \cite{GS}.
The equations \eqref{R-to-T} and \eqref{T-to-R} degenerate
upon direct substitution, but we can instead 
take the limit of both sides of Theorem~\ref{tutte-char} as $x\to1$,
obtaining (after some calculation, which we omit)
$$\rho_{1,y}^k(G)= k^{n(G)}.$$

What can be said about Tutte characters in light of
Proposition~\ref{to-binomial-alg}?
Replacing $x$ with $(k+x-1)/(x-1)$ in Theorem~\ref{tutte-char}, we get
\begin{equation}
\label{jimmy}
\begin{aligned}
P_{(k+x-1)/(x-1),y}(G;k) ~&=~ k^{c(G)} (k/(x-1))^{\rk(G)} T(G;x,y)\\
~&=~ k^{n(G)} (x-1)^{-\rk(G)} T(G;x,y).
\end{aligned}
\end{equation}
One consequence is a formula for the Tutte polynomial
in terms of $P$:
\begin{equation}
\label{Txy-form-new}
T(G;x,y) = k^{-n(G)} (x-1)^{\rk(G)} P_{(k+x-1)/(x-1),y}(G;k).
\end{equation}
In addition, the left-hand-side of~\eqref{jimmy}
--- which is an element of $\Ff(x,y)[k]$ --- is actually just $k^{n(G)}$ times
a rational function in $x$ and $y$.  Setting $k=x-1$ or $k=1-x$,
we can write down simpler formulas for the Tutte polynomial in terms of $P$:
\begin{align*}
T(G;x,y) &= (x-1)^{-c(G)} P_{2,y}(G;x-1),\\
T(G;x,y) &= (-1)^{n(G)} (x-1)^{c(G)} P_{0,y}(G;1-x).
\end{align*}

\section{A reciprocity relation between $K_n$ and $K_m$}
For each nonzero scalar~$c\in\Ff$, there is a character~$\xi_c$ on~$\GG$
defined by $\xi_c(G)=c^{n(G)}$.  In this concluding section, we list some
basic identities involving convolution powers of these characters
and their interactions with the character $\zeta$.  The main result,
Proposition~\ref{wacko-reciprocity}, is
a ``reciprocity'' relation between the complete graphs $K_n$ and $K_m$.

First, let $c,d\in\Ff$ be arbitrary nonzero scalars, and let $k$ be an integer.
The definition of convolution, together with a straightforward
application of the binomial theorem, yields
the identities
$$\xi_c*\xi_d=\xi_{c+d}, \qquad\xi_c^k=\xi_{ck}, \qquad
\xi_c^{-1}=\xi_{-c}=\overline{\xi_c}.$$
In particular, the characters $\xi_c$ form a subgroup of $\Xx(\GG)$ isomorphic
to the additive group $\Ff$.
Another easily obtained fact is the following: for every graph $G$,
$$(\zeta*\xi_c)(G)=\sum_{\text{cocliques } Q} c^{|V(G)|-|Q|}.$$

\begin{prop}  \label{wacko-reciprocity}
For all $n,m\in\Zz_{\geq0}$, we have
$$(\zeta^n*\xi_1)(K_m) = (\zeta^m*\xi_1)(K_n).$$
\end{prop}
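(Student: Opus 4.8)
The plan is to evaluate both sides of the claimed identity explicitly, by unwinding the convolution power and recognizing the chromatic polynomial of a complete graph.

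First I would compute $(\zeta^n*\xi_1)(K_m)$ directly from the coproduct. Every induced subgraph of $K_m$ is again a complete graph, so $\Delta(K_m)=\sum_{j=0}^m\binom{m}{j}\,K_j\otimes K_{m-j}$, and since $\xi_1$ is identically $1$ this gives
$$(\zeta^n*\xi_1)(K_m)=\sum_{j=0}^m\binom{m}{j}\,\zeta^n(K_j)\,\xi_1(K_{m-j})=\sum_{j=0}^m\binom{m}{j}\,\zeta^n(K_j).$$
Next I would identify $\zeta^n(K_j)$: by Proposition~\ref{to-binomial-alg} we have $\zeta^n(K_j)=P_{\zeta,K_j}(n)$, and (as observed in Section~\ref{char-section}) $P_{\zeta,G}$ is the chromatic polynomial $\chi(G;\,\cdot\,)$; since $\chi(K_j;k)=k(k-1)\cdots(k-j+1)$, we conclude $\zeta^n(K_j)=n(n-1)\cdots(n-j+1)$, which I abbreviate $(n)_j$ (with the convention $(n)_0=1$). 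Hence $(\zeta^n*\xi_1)(K_m)=\sum_{j=0}^m\binom{m}{j}(n)_j$.

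The final step is the elementary observation
$$\binom{m}{j}(n)_j=\frac{m!}{j!\,(m-j)!}\cdot\frac{n!}{(n-j)!}=j!\binom{m}{j}\binom{n}{j},$$
which is visibly symmetric in $m$ and $n$. Because $\binom{m}{j}=0$ for $j>m$ and $(n)_j=0$ for $j>n$, one may extend the range of summation to $0\le j\le\max(m,n)$ without changing the value; the resulting sum $\sum_{j\ge0} j!\binom{m}{j}\binom{n}{j}$ is symmetric under $m\leftrightarrow n$, which is exactly the asserted identity $(\zeta^n*\xi_1)(K_m)=(\zeta^m*\xi_1)(K_n)$.

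I do not expect a serious obstacle here: once the convolution is expanded, the statement reduces to a term-by-term binomial identity, and the only point requiring a little care is matching the ranges of summation on the two sides. As a conceptual check, $\sum_j j!\binom{m}{j}\binom{n}{j}$ is the total number of matchings (of all sizes, including the empty one) in the complete bipartite graph $K_{m,n}$, an obviously $m\leftrightarrow n$-symmetric quantity; one could alternatively organize the proof around that bijective interpretation.
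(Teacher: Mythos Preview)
Your argument is correct and follows essentially the same route as the paper's proof: expand the convolution using the coproduct of $K_m$, identify $\zeta^n(K_j)$ with the chromatic polynomial value $(n)_j=j!\binom{n}{j}$, and observe that the resulting sum $\sum_j j!\binom{m}{j}\binom{n}{j}$ is symmetric in $m$ and $n$. The only additions are your explicit treatment of the summation range and the pleasant remark interpreting the sum as the matching count of $K_{m,n}$, neither of which changes the underlying strategy.
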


\begin{proof}
Consider the action of the character
$\zeta^n*\xi_1$ on the graph $K_m$:
\begin{align}
(\zeta^n*\xi_1)(K_m)
&= \sum_{V\subseteq[m]} \zeta^n(K_m|_V) \xi_1(K_m-V)\notag\\
&= \sum_{j\in\Zz} \binom{m}{j} \zeta^n(K_j) \xi_1(K_{m-j})\notag\\
&= \sum_{j\in\Zz} \binom{m}{j} \binom{n}{j} j!. \label{is-symm}
\end{align}
Here the summand $j$ corresponds to $m-|V|$; note that
$\zeta^n(K_j)=\binom{n}{j} j!$ is the number of $n$-colorings of
$K_j$.  (We are using the convention that $\binom{n}{j}$ vanishes
when $j<0$ or $j>n$.)  The expression \eqref{is-symm} is symmetric
in $m$ and $n$, implying the desired result.
\end{proof}

\begin{cor} \label{funky}
Let $m,n$ be nonnegative integers.  Then
\[
(\zeta^n*\xi_{-1})(K_m) = (-1)^{n+m}(\zeta^m*\xi_{-1})(K_n).
\]
\end{cor}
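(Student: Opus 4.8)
The plan is to deduce Corollary~\ref{funky} from Proposition~\ref{wacko-reciprocity} by relating the character $\xi_{-1}$ to $\xi_1$ via the bar involution. Recall from the discussion preceding Proposition~\ref{wacko-reciprocity} that $\xi_{-1} = \bar{\xi_1}$, and that $\phi \mapsto \bar\phi$ is an automorphism of $\Xx(\GG)$ satisfying $\bar\phi(H) = (-1)^{n(H)}\phi(H)$. Also note that $\zeta$ is the characteristic function of edgeless graphs, so $\bar\zeta(H) = (-1)^{n(H)}\zeta(H)$; in particular $\bar\zeta(K_j)$ equals $(-1)^j$ when $j \le 1$ and $0$ otherwise, which is exactly $(-1)^j \zeta(K_j)$.

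First I would write $(\zeta^n * \xi_{-1})(K_m)$ by expanding the convolution exactly as in the proof of Proposition~\ref{wacko-reciprocity}:
\begin{align*}
(\zeta^n*\xi_{-1})(K_m)
&= \sum_{j\in\Zz} \binom{m}{j} \zeta^n(K_j)\, \xi_{-1}(K_{m-j})\\
&= \sum_{j\in\Zz} \binom{m}{j} \binom{n}{j} j!\, (-1)^{m-j},
\end{align*}
using $\xi_{-1}(K_{m-j}) = (-1)^{m-j}$ and $\zeta^n(K_j) = \binom{n}{j}j!$ as in \eqref{is-symm}. The key algebraic step is then to factor the sign: writing $(-1)^{m-j} = (-1)^{m}(-1)^{n}(-1)^{n-j}\cdot(-1)^{-2n+2j+\dots}$ is clumsier than simply observing $(-1)^{m-j} = (-1)^{m+n}(-1)^{n-j}$ (since $(-1)^{2n}=1$ and $(-1)^{-2j}=1$), so the displayed sum equals $(-1)^{m+n}\sum_j \binom{m}{j}\binom{n}{j} j!\,(-1)^{n-j}$. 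The inner sum is, by the same computation applied with $m$ and $n$ interchanged, precisely $(\zeta^m * \xi_{-1})(K_n)$.

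Alternatively, and perhaps more cleanly, I would argue purely at the level of characters: since $\zeta^n * \xi_{-1} = \zeta^n * \bar{\xi_1}$ and the bar involution interacts with convolution, one has $\bar\phi * \bar\psi = \overline{\phi * \psi}$ (bar is a group automorphism of $\Xx(\GG)$), so $\zeta^n * \xi_{-1} = \overline{\bar{\zeta}^{\,n} * \xi_1}$. Evaluating at $K_m$ gives $(-1)^m (\bar\zeta^{\,n} * \xi_1)(K_m)$. The slight wrinkle is that $\bar\zeta \ne \zeta$, so this doesn't immediately reduce to the previous proposition; one must track the extra sign coming from $\bar\zeta(K_j) = (-1)^j\zeta(K_j)$ in the convolution expansion. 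Expanding $(\bar\zeta^{\,n}*\xi_1)(K_m)$ yields $\sum_j \binom{m}{j}\binom{n}{j}j!\,(-1)^{j}$ (the $n$ copies of $\bar\zeta$ contribute a single sign $(-1)^{j}$, since $\bar\zeta^{\,n}(K_j)$ is supported where the copies partition $[j]$ into singletons and the empty set, giving overall sign $(-1)^j$), so $(\zeta^n*\xi_{-1})(K_m) = (-1)^m\sum_j \binom{m}{j}\binom{n}{j}j!(-1)^j$, and this last expression is manifestly symmetric in $m,n$ up to the prefactor, giving $(-1)^m X = (-1)^{m+n}\cdot(-1)^n X$ with $X$ symmetric, which rearranges to the claim.

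I expect the main (very minor) obstacle to be bookkeeping the signs correctly — specifically, making sure that replacing $\xi_1$ by $\xi_{-1}$ introduces exactly the factor $(-1)^{m-j}$ inside the sum and no more, and then correctly distributing $(-1)^{m-j} = (-1)^{m+n}(-1)^{n-j}$ so that the residual sum is exactly the expression \eqref{is-symm} with $m,n$ swapped. There is no real difficulty here: once the convolution is expanded as in the proof of Proposition~\ref{wacko-reciprocity}, the corollary is immediate from the symmetry of $\binom{m}{j}\binom{n}{j}j!$ together with the sign identity $(-1)^{m-j}=(-1)^{n-j}(-1)^{m+n}$. I would present the short direct computation rather than the character-level argument, since it is self-contained and transparent.
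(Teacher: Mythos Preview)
Your proposal is correct and follows essentially the same approach as the paper. The paper's proof says the identity ``can be obtained by applying the bar involution to both sides of Proposition~\ref{wacko-reciprocity} (or, equivalently, redoing the calculation, replacing~$\xi_1$ with~$\xi_{-1}$ throughout)''---your direct computation is exactly the latter, and your alternative character-level argument is a version of the former; your recommendation to present the short direct computation is well taken, since it makes the sign bookkeeping transparent.
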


\begin{proof}
The desired identity can be obtained by applying the bar involution to both sides of Proposition~\ref{wacko-reciprocity}
(or, equivalently, redoing the calculation, replacing~$\xi_1$ with~$\xi_{-1}$ throughout).
\end{proof}

Experimental evidence indicates that
$$(\zeta^{-1}*\xi_1)(K_n) = (-1)^n  D_n, \qquad (\zeta^{-1}*\xi_{-1})(K_n) = (-1)^n  A_n,$$
where $D_n$ is the number of derangements of $\{1,2,\dots,n\}$ and $A_n$ is the number of arrangements (sequences \href{http://oeis.org/A000166}{A000166} and \href{http://oeis.org/A000522}{A000522} of \cite{EIS}, respectively).
%If we use $\xi_{-2}$ then we get the coefficients of $e^{2x}/(1-x)$ (sequence~A010842 in~\cite{EIS}).
More generally, we have conjectured that for every scalar~$c$ and integer~$k$,
the
exponential generating function for $(\zeta^k*\xi_c)(K_n)$ is
\begin{equation} \label{EGF}
\sum_{n\geq 0} (\zeta^k*\xi_c)(K_n) \frac{x^n}{n!} ~=~ e^{cx}(1+x)^k
\end{equation}
(see \cite[Example~2.2.1]{EC1}, \cite[Example~5.1.2]{EC2}).
In fact, formula~\eqref{EGF} follows from independent, unpublished work of
Aguiar and Ardila~\cite{AA-PC}.

\bibliographystyle{amsalpha}
\bibliography{biblio}
\end{document}